  \def\@Opargbegintheorem#1#2#3#4{#4\trivlist
      \item[]{#3#2\@thmcounterend\ }}%
  \def\@Opargbegintheorem#1#2#3#4{#4\trivlist
      \item[\hskip\labelsep{#3#1}]{#3(#2)\@thmcounterend\ }}%
 \newtheorem{thm}{Theorem}[subsection]%{\bfseries}{\itshape}
 \newtheorem{cor}[thm]{Corollary}%{\bfseries}{\itshape}
 \newtheorem{lem}[thm]{Lemma}%{\bfseries}{\itshape}
 \newtheorem{defn}{Definition}[subsection]%{\bfseries}{\itshape}
 \newtheorem{exmp}{Example}[subsection]%{\bfseries}{\itshape}
 \newtheorem{rema}{Remark}[subsection]%{\bfseries}{\itshape}
\def\rem#1#2{ #1\;\textnormal{rem}\;#2}
\def\res#1#2{ \textnormal{eval}\left(#1;\  #2\right)}
\providecommand{\keywords}[1]
{
	\small	
	\textbf{\text{Keywords:}} #1
}
\providecommand{\subclass}[1]
{
	\small	
	\textbf{\text{MSC(2010):}} #1
}
\title{\Large The Restricted Partition and $q$-Partial Fractions \thanks{Dedicated to Bhagawan Sri Sathya Sai Baba.}}
\author{\normalsize N. Uday Kiran\thanks{nudaykiran@sssihl.edu.in}  \\
	\small Department of Mathematics and Computer Science\\
	\small Sri Sathya Sai Institute of Higher Learning, Puttaparthi, India \\
}
\date{}
\begin{document}
    \maketitle
	\begin{abstract}

         The restricted partition function $p_{N}(n)$ counts the partitions of $n$ into at most $N$ parts. In the nineteenth century Sylvester showed that these partitions can be expressed as a sum of $k$-periodic quasi-polynomials ($1\leq k\leq N$) which he termed as Waves. It is now well-known that one can easily perform a wave decomposition using a special type of partial fraction decomposition (the so-called $q$-partial fractions) of the generating function of $p_{N}(n)$. In this paper we show that the coefficients of these $q$-partial fractions can be expressed as a linear combination of the Ramanujan sums. In particular, we show, for the first time, an appearance of the degenerate Bernoulli numbers, the degenerate Euler numbers and a special generalization of the Ramanujan sums, which we term as a Gaussian-Ramanujan sum, in the formulae for certain waves. These coefficients not only provide a good approximation of $p_{N}(n)$ but they can also be used for obtaining good bounds. Further, we provide a combinatorial meaning to these sums. Our approach for partial fractions is based on a projection operator on the $I$-adic completion of the ring of polynomials, where $I$ is an ideal generated by the Cyclotomic polynomial. \\
		\\
		\keywords{ Restricted Partition $\cdot$ Sylvester Waves $\cdot$ Cyclotomic Polynomials $\cdot$ Ramanujan Sum  $\cdot$ Degenerate Numbers  }\\
		\subclass{05A15 $\cdot$  11P82 $\cdot$ 11B68}
	\end{abstract}

%\tableofcontents

\section{Introduction}
Let the restricted $q$-product be $(x)_{N}=\prod^{N}_{i=1}(1-x^{i})$. Then, the generating function of the restricted partition of $n$ into parts none of which exceed $N$ is given by $$F_{N}(x)=1/(x)_{N}=\sum_{n=0}^{\infty}p_{N}(n)x^{n}.$$ Sylvester made the following remarkable decomposition:  
$$p_{N}(n)=\sum_{k=1}^{N} W_{k}(n;N).$$ The term $W_{k}(n;N)$, called a $k^{th}$ Sylvester wave, a $k$-periodic quasi-polynomial in $n$ given by the coefficient of $z^{-1}$, i.e. the residue at $z=0$ of the function 
\begin{equation*}
W_{j}(n;N)=\textbf{Res}_{z=0}\sum_{\xi_{j}}\frac{\xi_{j}^{n}e^{nz}}{(1-\xi_{j}^{-1}e^{-z})(1-\xi_{j}^{-2}e^{-2z})\cdots (1-\xi_{j}^{-N}e^{-Nz})},
\end{equation*}
where $\xi_{j}=e^{2\pi i/j}$. Moreover, $\xi_{1}=1$ so that $W_{1}(t;\textbf{A})$ is a polynomial in $t$.  

A $q$-partial fraction \cite{munagi2, uk} of $F_{N}(x)$ can be written with rational coefficients $\Gamma_{hkl}(N)$ as
\begin{eqnarray}
\nonumber F_{N}(x)&=&\sum_{l=1}^{N}\frac{g_{1,l}^{(N)}(x)}{(1-x)^{l}}+\sum_{l=1}^{\lfloor N/2\rfloor}\frac{g_{2,l}^{(N)}(x)}{(1+x)^{l}}+\sum_{k=3}^{N}\sum_{l=1}^{\lfloor N/k\rfloor}\frac{g_{kl}^{(N)}(x)}{(1-x^{k})^{l}} \\
&=& \sum_{l=1}^{N}\frac{\Gamma_{0,1,l}(N)}{(1-x)^{l}}+\sum_{l=1}^{\lfloor N/2\rfloor}\frac{\Gamma_{0,2,l}(N)}{(1+x)^{l}}+\sum_{k=3}^{N}\sum_{l=1}^{\lfloor N/k\rfloor}\sum_{h=0}^{k-1}\frac{\Gamma_{hkl}(N)x^{h}}{(1-x^{k})^{l}}.\label{qpf}
\end{eqnarray}
Then, one can directly express the $1^{st}$ and the $2^{nd}$ wave as
\begin{equation}\label{12_wave}
W_{1}(n;N)=\sum_{l=1}^{N}\begin{pmatrix}
n+l-1 \\ n
\end{pmatrix} \Gamma_{0,1,l}\quad  \textnormal{ and}\quad 
W_{2}(n;N)=(-1)^{n}\sum_{l=1}^{\lfloor N/2\rfloor}\begin{pmatrix}
n+l-1 \\ n
\end{pmatrix}\Gamma_{0,2,l}.
\end{equation}
Using $\Gamma_{hkl}(N)$ the $k^{th}$ wave $W_{k}(n;N)$ can be written as 
$$
W_{k}(n; N)=\sum^{\lfloor N/k\rfloor}_{l=1}\sum_{i=0}^{l-1}\begin{pmatrix}\lfloor \frac{n}{k}\rfloor +l-i-1\\ l-i-1\end{pmatrix}\Gamma_{(n\%k)kl}(N),$$
where $\lfloor t \rfloor$ is the greatest integer $\leq t$ and $\%$ is the remainder operator. Furthermore, if one has a $q$-partial fraction it is much easier to use a circulator notation for periodic sequence, which occur in the restricted partition formulae.

Surprisingly, in spite of the utility of the coefficients $\Gamma_{hkl}(N)$, not much work on the arbitrary $N$ case is available in the literature (see for instance \cite{munagi2, sullivian, Rubin}). In this work, using an algebraic formalism developed in \cite{uk} we address the arbitrary $N$ case. Our main contribution in this work is the observation of an appearance of the special trigonometric sums (degenerate Bernoulli numbers, degenerate Euler numbers and a generalization of the Ramanujan sums) in the coefficients on the Sylvester waves. We also introduce the term Gaussian-Ramanujan sum for the generalization of the Ramanujan sum and briefly study its properties.

Glaiser provided direct formulae for $W_{1}(n;N)$ \cite{glaisher} using the Bernoulli numbers. O' Sullivian \cite{sullivian} showed that for $n \leq 1480$ the first waves  is a good approximation to $p(n)$, the unrestricted partition function. Rubinstein et.al. \cite{Rubin} showed that certain higher-order Bernoulli number can be used to express the waves. Moreover, Rubinstein et.al. show that the waves are not only periodic but also satisfy the same linear recurrence relation satisfied by $p_{N}(n)$. In this paper, we develop formulae based on trigonometric sums that are amenable to good estimations.

\section{Main Results} 

In this section, we briefly state the methodology and summarize the main results of the paper. The first result is on a formula for $g_{kl}^{(N)}(x)$ given in (\ref{qpf}) and the other three results are on the coefficient $\Gamma_{hkl}(N)$ for specific indices $k$ and $l$. 

The tool central to our investigation is the polynomial-valued \textit{eval} operator. For $r(x),s(x),a(x)$ non-trivial polynomials with $\alpha(x) s(x)=1 \textnormal{ mod }a(x)$ we define 
$$
\res{\frac{r(x)}{s(x)}}{a(x)} = \rem{(\alpha(x)r(x))}{a(x)},
$$ 
where rem is the polynomial remainder operator. As we show in Section \ref{sec_eval}, the eval operator can be treated as a projection operator on the $I$-adic completion, where $I$ is the ideal generated by $a(x)$. 

In \cite{uk}, the current author had demonstrated the effectiveness of the eval operator for partial fraction decomposition with the so-called extended cover-up method. Our $q$-partial fraction decomposition of $F_{N}(x)$ relies on factorizing the denominator into cyclotomic polynomials. To recall, the $n^{th}$ cyclotomic polynomial is a monic and irreducible polynomial in $\mathbb{Z}[x]$ denoted $\Phi_{n}(x)$ which satisfies  
\begin{equation}\label{factor_phi}
1-x^{n}=\prod_{d|n} \Phi_{d}(x)
\end{equation} 
and $\Phi_{1}(x)=1-x$. Furthermore, distinct cyclotomic polynomials are pairwise relatively prime, i.e. for $m\neq n$ we have $\textnormal{gcd}(\Phi_{m}(x),\Phi_{n}(x))=1$. In view of these observations one can easily factorize $F_{N}(x)$ into irreducible factors
\begin{equation}\label{factor_FNx}
F_{N}(x)=\frac{1}{\prod_{k=1}^{N}(1-x^k)}=\frac{1}{\prod_{k=1}^{N}\Phi_{k}(x)^{\lfloor N/k\rfloor}},
\end{equation}
where $\lfloor t\rfloor$ is the greatest integers $\leq t$. Equipped with the extended cover-up method (stated in Theorem \ref{cover_up} and proved in \cite{uk}) one can easily obtain a $q$-partial fraction 
\begin{equation}\label{qPF1}
\frac{1}{\prod_{k=1}^{N}(1-x^k)}=\sum_{k=1}^{N}\frac{h_{k}^{(N)}(x)}{\Phi_{k}(x)^{\lfloor N/k\rfloor}}=\sum_{k=1}^{N}\frac{\Theta_{k}(x)^{\lfloor N/k\rfloor}h_{k}^{(N)}(x)}{(1-x^k)^{\lfloor N/k\rfloor}},
\end{equation}
where $\Theta_{k}(x)$ satisfies $\Theta_{k}(x)\Phi_{k}(x)=1-x^k$ and is called the inverse cyclotomic polynomial \cite{Moree}. Here $h_{k}^{(N)}(x)$ is given by 
\begin{equation}\label{hNk}
h_{k}^{(N)}(x)=\textnormal{eval}\left(\frac{1}{\Phi_{1}(x)^{N}\cdots \widehat{\Phi_{k}(x)^{\lfloor N/k\rfloor}} \cdots \Phi_{N}(x)};\quad\Phi_{k}(x)^{\lfloor N/k\rfloor}\right),
\end{equation}
where $\widehat{}$ means dropping the corresponding term. 

Our first main result in this paper is Theorem \ref{main_qpf} that simplifies $h_{k}^{(N)}(x)$ in (\ref{qPF1}) to the term $g_{kl}^{(N)}(x)$ in (\ref{qpf}). Subsequently, for specific choice of $k$ and $l$ we determine the structure of the polynomial $g_{kl}^{(N)}(x)$. Recasting a result in \cite{uk} we obtain the following coefficients in terms of degenerate Bernoulli numbers. 

\begin{thm}[\cite{uk}] 
For $1\leq j< N$ we have 
\begin{equation}\label{1_wave}
\Gamma_{01(N-j)}(N) = \frac{(-1)^{j}}{N!}\sum_{j_{2}+\cdots+j_{N}=j}\frac{\tilde{\beta}_{j_{2}}(2)\cdots \tilde{\beta}_{j_{N}}(N)}{j_{2}!\cdots j_{N}!}    
\end{equation}
and $\Gamma_{01N}(N)=1/N!$. Here $\tilde{\beta}_{j}(n)=n^{k}\beta_{k}(1/n)$, where $\beta_{k}(\lambda)$ is the degenerate Bernoulli number. These terms are explained in Section \ref{sec_W1W2}. 
\end{thm}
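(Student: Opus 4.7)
My plan is to isolate the coefficients $\Gamma_{01l}(N)$ as Taylor coefficients of the polynomial $h_1^{(N)}(x)$ at $x=1$, and then recognise the resulting rational expression as a product of degenerate Bernoulli generating series. Since $\Theta_1(x)=1$ (because $\Phi_1(x)=1-x$), the $k=1$ summand of (\ref{qPF1}) is $h_1^{(N)}(x)/(1-x)^N$; matching this with the $k=1$ block of (\ref{qpf}), in which $g_{1l}^{(N)}(x)=\Gamma_{01l}(N)$ is a constant, yields
$$
h_1^{(N)}(x)=\sum_{l=1}^N \Gamma_{01l}(N)\,(1-x)^{N-l},
$$
so that $\Gamma_{01(N-j)}(N)$ is precisely the coefficient of $(1-x)^j$ in the polynomial $h_1^{(N)}(x)$ of degree $<N$. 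By (\ref{hNk}), together with (\ref{factor_phi}) and the telescoping identity $\prod_{k=2}^N\Phi_k(x)^{\lfloor N/k\rfloor}=\prod_{k=2}^N[k]_x$ (obtained by dividing (\ref{factor_FNx}) by $(1-x)^N$, where $[k]_x:=(1-x^k)/(1-x)$), the polynomial $h_1^{(N)}(x)$ is uniquely characterised modulo $(1-x)^N$ as $1\big/\prod_{k=2}^N[k]_x$.

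Next I would substitute $u:=1-x$, use $[k]_{1-u}=(1-(1-u)^k)/u$, and write
$$
h_1^{(N)}(1-u)\equiv \prod_{k=2}^N \frac{u}{1-(1-u)^k}\pmod{u^N}.
$$
The crux is to recognise each factor as a rescaled degenerate Bernoulli series. Setting $\lambda=1/k$ and $t=-ku$ in the defining identity
$$
\frac{t}{(1+\lambda t)^{1/\lambda}-1}=\sum_{n\geq 0}\beta_n(\lambda)\,\frac{t^n}{n!}
$$
transforms the left-hand side into $ku/(1-(1-u)^k)$, and dividing by $k$ yields, with $\tilde\beta_n(k)=k^n\beta_n(1/k)$,
$$
\frac{u}{1-(1-u)^k}=\frac{1}{k}\sum_{n\geq 0}(-1)^n\frac{\tilde\beta_n(k)}{n!}\,u^n.
$$

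Finally, multiplying over $k=2,\ldots,N$, pulling out $\prod_{k=2}^N (1/k)=1/N!$, and extracting $[u^j]$ gives exactly the claimed multinomial-type sum; the overall sign $(-1)^j$ factors out of $(-1)^{j_2+\cdots+j_N}$ because $j_2+\cdots+j_N=j$. The edge case $j=0$ recovers $\Gamma_{01N}(N)=1/N!$ from $\tilde\beta_0(k)=1$. The main (and essentially only non-routine) obstacle is the substitution $\lambda=1/k,\ t=-ku$ that identifies $u/(1-(1-u)^k)$ with a degenerate Bernoulli generating function; once this is in place, the rest is formal power series manipulation together with the characterisation of $h_1^{(N)}(x)$ guaranteed by the extended cover-up method (Theorem \ref{cover_up}).
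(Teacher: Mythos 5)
Your proposal is correct and follows essentially the same route as the paper (and the cited source \cite{uk}): isolate $h_{1}^{(N)}(x)$ as the truncated Taylor expansion of $1/\prod_{k=2}^{N}\Psi_{k}(x)$ about $x=1$ via the extended cover-up method and the Eval--Taylor/sum-of-products results, then expand each factor using the degenerate Bernoulli identity, which is exactly equation (\ref{young_eqn}) obtained by the same substitution $\lambda=1/k$, $t=-k(1-x)$. The only cosmetic difference is notation ($[k]_x$ versus $\Psi_k(x)$); the argument itself matches the paper's treatment, which it repeats verbatim for the $W_2$ analogue.
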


It is a pleasant surprise that the coefficients $\Gamma_{20l}(N)$ of second wave is similar to (\ref{1_wave}), but with alternating degenerate Bernoulli and degenerate Euler numbers. We state the result below and prove it in Section

\begin{thm}\label{main_W2}
For $1\leq j< \lfloor N/2\rfloor$ we have
\begin{equation}\label{2_wave}
\Gamma_{02(\lfloor N/2\rfloor-j)}(N) =\frac{(-1)^{j}}{2^{N}\lfloor N/2\rfloor!} \sum_{j_{1}+\cdots+j_{N}=j}\frac{\tilde{\gamma}_{j_{1}}(1)\cdots\tilde{\gamma}_{j_{N}}(N)}{j_{1}!\cdots j_{N}!}    
\end{equation}
and $\Gamma_{02\lfloor N/2\rfloor}=\frac{1}{2^{N}\lfloor N/2\rfloor!}$ and $g_{2l}^{(N)}(x)=\Gamma_{02l}(N)$ in (\ref{qpf}) where
$$  
\tilde{\gamma}_{k}(n)=\begin{cases}
\tilde{\varepsilon}_{k}(n)=n^{k}\varepsilon_{k}(1/n) & \textnormal{ for } n \textnormal{ odd},\\
\tilde{\beta}_{k}(n)=n^{k}\beta_{k}(1/n) & \textnormal{ for } n \textnormal{ even},
\end{cases}
$$
where $\beta_{k}(\lambda)$ and $\varepsilon_{k}(\lambda)$ are the degenerate Bernoulli number and degenerate Euler number respectively. These terms are discussed in Section \ref{sec_W1W2}. 
\end{thm}

By the formula (\ref{2_wave}) we can completely characterize the second wave $W_{2}(n;N)$ using (\ref{12_wave}). Clearly, due to the factor $(-1)^{n}$ we can easily see that $W_{2}(n;N)$ is a quasi-polynomial in $n$. Also interestingly, we observe that except for the $(-1)^{n}$ factor the term $W_{2}(n;N)$ is a polynomial in $n$. 

Further, from (\ref{euler_trig}), (\ref{euler_odd}) and \cite{uk} we have degenerate numbers expressed using trigonometric sums:
$$
\tilde{\varepsilon}_{k}(m)=k!\frac{(-1)^k}{m}\sum_{j=0}^{m-1}\frac{2\xi^{j}}{(1+\xi^{j})^{k+1}}\quad \textnormal{ and }\quad\tilde{\beta}_{k}(m)=k!\frac{(-1)^{k+1}}{m}\sum_{j=1}^{m-1}\frac{\eta^{j}}{(1-\eta^{j})^{k+1}},
$$
where $\xi^{m}+1=0$ and $\eta^{m}-1=0$; further, $\xi^{j}+1=0$ and $\eta^{j}-1=0$ for $0< j<m$. 

Our next main result relies on a Fourier Analysis on $g_{k\lfloor N/k \rfloor}(x)$ in order to obtain an exact form of its coefficients $\Gamma_{jk\lfloor N/k\rfloor}$. With these formulae we can completely characterize and provide a direct formula for the top-order term of $W_{k}(n;N)$. It is well known that the top-order terms approximate the waves well \cite{sullivian}.

\begin{thm}\label{main_gamma_jkNk}
The coefficients of $g_{k\lfloor N/k\rfloor}(x)$ are given by 
$$
\Gamma_{jk\lfloor N/k\rfloor}(N)=\frac{1}{k^{\lfloor N/k\rfloor+1}\lfloor N/k\rfloor!}\sigma_{k}(j;N\%k),
$$
where we call the term $\sigma_{k}(\cdot,\cdot)$ Gaussian-Ramanujan Sum (Definition \ref{GRS}) and Denote $R_{k,s}(x)=\sum_{j=0}^{k-1}\sigma_{k}(t;s)x^{t}$. Then we have
\begin{equation}\label{Rksx}
g_{k\lfloor N/k\rfloor}(x)=\frac{1}{k^{\lfloor N/k\rfloor+1}\lfloor N/k\rfloor!}R_{k,N\%k}(x).
\end{equation}
\end{thm}

We call $\sigma_{k}(t;s)$ as the Gaussian-Ramanujan sum as they are restricted $q$-product based generalizations of the Ramanujan sum. In fact, $\sigma_{k}(t;0)=c_{k}(t)$ and $\sigma_{k}(t;k-1)=(1/k)c_{k}(t)$. We show that the trigonometric sum $\sigma_{k}(t;j)$ for a fixed $k, 0\leq j<k$ and $0\leq t < k$, is an of the form $a/k$ where $a$ is an integer. We show that $\sigma_{k}(t;s)$ satisfies a linear recurrence relation which helps on to obtain efficient formulae for the associated polynomial $R_{k,s}(x)$ given in (\ref{Rksx}). A study of the polynomial $R_{k,0}(x)$, corresponding to the Ramanujan sum, is done in \cite{toth}. We show that the coefficients of $R_{k,s}(x)$ are bounded for certain $s$ but, in general, grow exponentially in $k$.

\section{The eval operator}\label{sec_eval}

The key to our methodology is the polynomial-valued eval operator. In \cite{uk} we defined the operator and demonstrated its efficacy to perform partial fractions. As the current work also relies heavily on eval, we first briefly summarize its properties and prove some more basic results necessary for this work.  

Let $r(x),s(x)$ and $p(x)$ be in the ring of polynomials $\mathbb{Q}[x]$ such that $s(x)$ and $p(x)$ are relatively prime. By B\'{e}zout's identity there exist two polynomials $\alpha(x),\beta(x)\in \mathbb{Q}[x]$ such that 
\begin{equation}\label{a_alpha}
\alpha(x)s(x)+\beta(x)p(x)=1.
\end{equation}

\begin{defn}\label{eval}
Given non-constant polynomials $r(x), s(x), p(x)\in \mathbb{Q}[x]$ and $p(x),s(x)$ satisfy (\ref{a_alpha}), we define the evaluation of the rational polynomial $\frac{r(x)}{s(x)}$ modulo $p(x)$ as
$$
\res{\frac{r(x)}{s(x)}}{p(x)} = \rem{(\alpha(x)r(x))}{p(x)},
$$ 
where $\alpha(x) s(x)=1 \textnormal{ mod }p(x)$ and rem is the polynomial remainder operator. 
\end{defn}

So, essentially the eval operator takes in a rational polynomial and gives out a polynomial with degree less than $\textnormal{deg}(p)$. This operator is well behaved under polynomial arithmetic \cite{uk}.  

\begin{lem} \label{lem_res}
Given $p(x), r_{i}(x), s_{i}(x)\in \mathbb{Q}[x]$ and $\textnormal{gcd}(s_{i}(x),p(x))=1$ for $i=0,1$. The following are some properties of $\textnormal{eval}$ function:
\begin{enumerate}
\item $\res{r_{0}(x)}{p(x)}=\rem{r_{0}(x)}{p(x)}$
\item $\textnormal{eval}\left(\frac{r_{0}(x)}{s_{0}(x)}+\frac{r_{1}(x)}{s_{1}(x)};\ p(x)\right)=\textnormal{eval}\left(\frac{r_{0}(x)}{s_{0}(x)}; p(x)\right)+\textnormal{eval}\left(\frac{r_{1}(x)}{s_{1}(x)}); p(x)\right)$
\item $\res{\frac{r_{0}(x)r_{1}(x)}{s_{0}(x)s_{1}(x)}}{p(x)}=\rem{\left\{\res{\frac{r_{0}(x)}{s_{0}(x)}}{p(x)}\res{\\\frac{r_{1}(x)}{s_{1}(x)}}{p(x)}\right\}}{p(x)}.$
\end{enumerate}
\end{lem}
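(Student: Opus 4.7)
The plan is to unify all three items under a single perspective: by B\'{e}zout's identity, each $s_i(x)$ is a unit in the quotient ring $\mathbb{Q}[x]/(a(x))$, with inverse represented by the polynomial $\alpha_i(x)$ of (\ref{a_alpha}). The eval operator is then the composition of multiplication by this inverse with the canonical lift to $\mathbb{Q}[x]$ that chooses the unique representative of degree less than $\deg a$. Every identity in the lemma should reduce to a short computation in $\mathbb{Q}[x]/(a(x))$ together with a careful accounting of degrees to translate back to $\mathbb{Q}[x]$.

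For item (1), the polynomial $1$ satisfies $1\cdot 1\equiv 1\bmod a(x)$, so taking $\alpha(x) = 1$ is permitted in Definition \ref{eval}, giving $\res{r_0(x)}{a(x)} = \rem{r_0(x)}{a(x)}$ immediately. For item (2), I would first observe that $\gcd(s_0(x)s_1(x), a(x)) = 1$ (a product of units is a unit) and that $\alpha_0(x)\alpha_1(x)$ serves as an inverse of $s_0(x)s_1(x)$ modulo $a(x)$, since $(\alpha_0 s_0)(\alpha_1 s_1)\equiv 1\bmod a$. Combining the two fractions over the common denominator $s_0 s_1$, the left-hand side becomes $\rem{\alpha_0\alpha_1(r_0 s_1 + r_1 s_0)}{a}$. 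Expanding and reducing with $\alpha_i s_i\equiv 1$, this equals $\alpha_0 r_0 + \alpha_1 r_1$ modulo $a(x)$. Since both $\rem{\alpha_0 r_0}{a}$ and $\rem{\alpha_1 r_1}{a}$ have degree less than $\deg a$, so does their sum, and hence the sum is its own remainder and coincides with the claimed right-hand side.

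Item (3) is analogous. The left-hand side equals $\rem{\alpha_0\alpha_1 r_0 r_1}{a}$, while the right-hand side equals $\rem{\rem{\alpha_0 r_0}{a}\cdot \rem{\alpha_1 r_1}{a}}{a}$; both are the canonical representatives in $\mathbb{Q}[x]/(a(x))$ of the same element $(\alpha_0 r_0)(\alpha_1 r_1)$, so they agree in $\mathbb{Q}[x]$. The outer rem on the right is necessary because the product of two polynomials each of degree less than $\deg a$ may have degree up to $2\deg a - 2$.

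The main obstacle in writing this cleanly is purely notational: the eval operator returns an element of $\mathbb{Q}[x]$ rather than of $\mathbb{Q}[x]/(a(x))$, so one must consistently distinguish polynomial equality from congruence modulo $a(x)$, and invoke the degree bound on rem precisely when moving a remainder past a sum (where no outer rem is needed) versus past a product (where it is). Provided this bookkeeping is done carefully, the three arguments above complete the proof.
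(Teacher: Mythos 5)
Your argument is correct: interpreting $\textnormal{eval}$ as multiplication by the inverse of $s_i$ in $\mathbb{Q}[x]/(a(x))$ followed by the canonical degree-reduced lift, and noting that the sum of two representatives of degree less than $\deg a$ needs no further reduction while their product does, establishes all three items. The paper states this lemma without any proof (it is treated as a routine consequence of Definition \ref{eval}, following the author's earlier work), so there is nothing to compare against; your verification is the standard one, and the only cosmetic caveat is that item (1) takes $s(x)=1$, which sits slightly outside the letter of Definition \ref{eval} (which asks for non-constant polynomials) but is clearly the intended reading.
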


The practical utility of eval relies on the following two results.   

\begin{lem}[Substitution Rule 1, \cite{uk}]
$$
\res{\frac{r_{0}(x) - p_{0}(x)p(x)}{s_{0}(x)- q_{0}(x) p(x)}}{p(x)}=\res{\frac{r_{0}(x)}{s_{0}(x)}}{p(x)},
$$
for some $p_{0}(x),q_{0}(x)\in \mathbb{Q}[x]$. Stated in words: one can successively substitute the occurrences of $p(x)$ by $0$ in both numerator and denominator of the rational polynomial given in the first argument without changing the result. 
\end{lem}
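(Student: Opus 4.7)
The plan is to prove the substitution rule by working directly from the definition of eval, showing that the \emph{same} Bézout coefficient $\alpha(x)$ can be used on both sides of the claimed equality. Specifically, I will first verify that the left-hand eval is even well-defined (i.e.\ the modified denominator is still coprime to $a(x)$), then show that the numerator computation reduces modulo $a(x)$ to the same polynomial in both cases.

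First I would establish well-definedness. Since $\gcd(s_{0}(x),a(x))=1$, any common divisor of $s_{0}(x)-q_{0}(x)a(x)$ and $a(x)$ would divide $(s_{0}(x)-q_{0}(x)a(x))+q_{0}(x)a(x)=s_{0}(x)$, and hence would divide $\gcd(s_{0},a)=1$. Therefore $\gcd(s_{0}(x)-q_{0}(x)a(x),\,a(x))=1$, so the eval operator on the left is defined. Moreover, if $\alpha(x),\beta(x)$ satisfy the Bézout relation $\alpha(x)s_{0}(x)+\beta(x)a(x)=1$, then
$$
\alpha(x)\bigl(s_{0}(x)-q_{0}(x)a(x)\bigr)+\bigl(\beta(x)+\alpha(x)q_{0}(x)\bigr)a(x)=1,
$$
so the same $\alpha(x)$ is an inverse of $s_{0}(x)-q_{0}(x)a(x)$ modulo $a(x)$.

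Next I would apply Definition \ref{eval} to both sides using this common $\alpha(x)$. The right-hand side is $\rem{(\alpha(x)r_{0}(x))}{a(x)}$ by definition. The left-hand side is
$$
\rem{\bigl(\alpha(x)(r_{0}(x)-p_{0}(x)a(x))\bigr)}{a(x)} = \rem{\bigl(\alpha(x)r_{0}(x)-\alpha(x)p_{0}(x)a(x)\bigr)}{a(x)}.
$$
Since the polynomial remainder is $\mathbb{Q}[x]$-linear and $\rem{(\alpha(x)p_{0}(x)a(x))}{a(x)}=0$, the two expressions coincide, which gives the result. The informal reading then follows: inside the arguments of eval one may freely replace any occurrence of $a(x)$ by $0$, iterating the single-substitution step as many times as needed.

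The only mildly delicate point in the argument is the well-definedness check, which must be done before invoking the definition, since the eval operator is only defined when the denominator is coprime to $a(x)$. After that, the computation is a one-line consequence of linearity of the remainder operator, so I do not anticipate any genuine obstacle.
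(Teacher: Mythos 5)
Your proof is correct. The paper itself states this lemma with a citation to \cite{uk} and gives no proof here, so there is nothing to compare against in this source; your argument --- checking that $s_{0}(x)-q_{0}(x)a(x)$ remains coprime to $a(x)$, observing that the same B\'ezout coefficient $\alpha(x)$ inverts both denominators modulo $a(x)$, and then killing the $\alpha(x)p_{0}(x)a(x)$ term under the remainder --- is the natural direct verification from Definition \ref{eval} and is complete. The one point worth making explicit (though it is implicit in the paper's use of eval as a well-defined operator) is that $\rem{(\alpha(x)r(x))}{a(x)}$ does not depend on which inverse $\alpha(x)$ of the denominator modulo $a(x)$ is chosen, which justifies computing both sides with your common choice of $\alpha(x)$.
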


\begin{thm}[Extended Cover-Up Method, \cite{uk}]\label{cover_up}
Let $p_{1}(x),\cdots, p_{n}(x)\in \mathbb{Q}[x]$ be pairwise relatively prime polynomials and let $f(x)\in \mathbb{Q}[x]$ satisfying $deg(f)< deg(p_{1}\cdots p_{k})$. Then, we have the following identity 
\begin{equation}\label{extended_cover_up_eqn}
f(x)\prod_{j=1}^{k}\frac{1}{p_{j}(x)}=\sum^{k}_{j=1}\frac{\textnormal{eval}(f(x)/g_{j}(x);p_{j}(x))}{p_{j}(x)},
\end{equation}
where $g_{j}(x)=p_{1}(x)\cdots\widehat{p_{j}(x)}\cdots p_{k}(x)$ and $\ \widehat{ }$ refers to dropping the corresponding factor.  
\end{thm}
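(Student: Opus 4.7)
The plan is to reduce the identity to the standard partial fraction decomposition over $\mathbb{Q}[x]$ and then identify the resulting numerators with the $\textnormal{eval}$ expressions via Definition \ref{eval}. Since $p_{1}(x),\dots,p_{k}(x)$ are pairwise relatively prime in $\mathbb{Q}[x]$ (a PID), the Chinese Remainder Theorem guarantees that for every $f(x)$ with $\deg f<\deg(p_{1}\cdots p_{k})$ there exist unique polynomials $r_{1}(x),\dots,r_{k}(x)$ with $\deg r_{j}<\deg p_{j}$ satisfying
\begin{equation*}
\frac{f(x)}{\prod_{j=1}^{k}p_{j}(x)}=\sum_{j=1}^{k}\frac{r_{j}(x)}{p_{j}(x)}.
\end{equation*}
This is where the degree hypothesis on $f$ is used.

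Next I would clear denominators, obtaining the polynomial identity $f(x)=\sum_{j=1}^{k}r_{j}(x)g_{j}(x)$, where $g_{j}(x)=\prod_{i\neq j}p_{i}(x)$. Reducing modulo $p_{j}(x)$, every term with $i\neq j$ vanishes because $p_{j}\mid g_{i}$ for $i\neq j$, giving
\begin{equation*}
f(x)\equiv r_{j}(x)\,g_{j}(x)\pmod{p_{j}(x)}.
\end{equation*}
By pairwise coprimality, $\gcd(g_{j}(x),p_{j}(x))=1$, so by B\'ezout's identity there is $\alpha_{j}(x)\in\mathbb{Q}[x]$ with $\alpha_{j}(x)g_{j}(x)\equiv 1\pmod{p_{j}(x)}$. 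Multiplying through yields $r_{j}(x)\equiv \alpha_{j}(x)f(x)\pmod{p_{j}(x)}$.

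Finally, because $\deg r_{j}<\deg p_{j}$, the unique representative of this residue class of minimal degree is $r_{j}(x)$ itself, that is,
\begin{equation*}
r_{j}(x)=\textnormal{rem}\bigl(\alpha_{j}(x)f(x);\,p_{j}(x)\bigr).
\end{equation*}
By Definition \ref{eval}, the right hand side is exactly $\textnormal{eval}\bigl(f(x)/g_{j}(x);\,p_{j}(x)\bigr)$, which completes the identification and hence the proof.

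I do not expect a substantive obstacle: the argument is essentially the Chinese Remainder Theorem in $\mathbb{Q}[x]$ paired with unwinding the definition of $\textnormal{eval}$. The only subtlety is keeping the degree bounds straight so that (i) the CRT decomposition with $\deg r_{j}<\deg p_{j}$ is guaranteed to exist and be unique, and (ii) the polynomial $r_{j}(x)$ is precisely the remainder on division by $p_{j}(x)$, which is what the operator $\textnormal{eval}(\,\cdot\,;p_{j})$ is designed to return.
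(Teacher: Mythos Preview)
Your argument is correct: reducing the polynomial identity $f=\sum_j r_j g_j$ modulo $p_j$ and then invoking B\'ezout and the degree bound to identify $r_j$ with $\textnormal{eval}(f/g_j;p_j)$ is exactly the right computation, and the degree hypotheses are used precisely where you say. Note, however, that the paper does not actually supply a proof of this theorem---it is quoted from the author's earlier work \cite{uk} and followed only by an illustrative expansion of the $k=3$ case---so there is no in-paper argument to compare against; your proof stands on its own as the standard CRT/partial-fraction justification.
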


It is easier to visualize the mechanism of extended cover-up method for a smaller case. The partial fractions (\ref{extended_cover_up_eqn}) can be viewed as   
$$
\frac{f}{p_{1}p_{2}p_{3}} = \frac{\textnormal{eval}\left(\frac{f}{\quad p_{2}p_{3}};p_{1}\right)}{p_{1}}+\frac{\textnormal{eval}\left(\frac{f}{p_{1}\quad p_{3}};p_{2}\right)}{p_{2}}+\frac{\textnormal{eval}\left(\frac{f}{p_{1}p_{2}\quad};p_{3}\right)}{p_{3}}.
$$

From the above expression one can easily notice that in order to obtain the term associated with $p_{i}(x)$ one has to drop $p_{i}(x)$ factor from the denominator (as if covering it) and evaluate the resulting expression with $p_{i}(x)$. 

For our further computations we need to perform eval with respect to powers of irreducible polynomials. 

\begin{thm}[$I$-adic expansion]
Let $p(x)\in \mathbb{Q}[x]$ be an irreduciable polynomial with $\textnormal{deg}(p)\geq 1$ and $\langle p\rangle$ be the corresponding maximal ideal. Then, the following holds:
For $a\in \widehat{\mathbb{Q}[x]}^{\langle p\rangle},$ there exists the unique $I$-adic expansion in $\widehat{\mathbb{Q}[x]}^{\langle p\rangle}$ such that 
$$
a=(a_{0},a_{0}+a_{1}p,\cdots,\sum_{i=0}^{k-1}a_{i}p^{i},\cdots)\in \widehat{\mathbb{Q}[x]}^{\langle p\rangle},
$$
where $a_{i}\in \mathbb{Q}[x]$ is a polynomial with $\textnormal{deg}(a_{i})<\textnormal{deg}(p)$. Furthermore, $\textnormal{eval}(\cdot; p(x)^{k}):\mathbb{Q}[x]_{\langle p\rangle}\rightarrow \mathbb{Q}[x]$ operator factors through $\widehat{\mathbb{Q}[x]}^{\langle p\rangle}$ and given by 
$$
\begin{tikzcd}
\textnormal{eval}(\ \cdot\ ;p(x)^k):\mathbb{Q}[x]_{\langle p\rangle} \arrow[r]\; \arrow[dr, dashrightarrow]
& \widehat{\mathbb{Q}[x]}^{\langle p\rangle} \arrow[d,"\pi_k"]\\
& \mathbb{Q}[x]
\end{tikzcd}
$$
where $\pi_{k}$ is a projection onto the $k^{th}$ component. 
\end{thm}

The above result can be easily seen by the below lemma.

\begin{lem}\label{eval_power_k_case}
Let $p(x)\in \mathbb{Q}[x]$ be an irreducble polynomial and $f(x)\in \mathbb{Q}[x]$ be a relatively prime to $p(x)$ such that the following equation holds 
\begin{equation}\label{gcd_f}
a(x)f(x)+b(x)p(x)=1,
\end{equation}
for some $a(x),b(x)$ two polynomials with rational coefficients. Then, we have 
$$
\textnormal{eval}\left(\frac{1}{f(x)}; p(x)^{k}\right)=\sum_{j=0}^{k-1}a_{j}(x)p(x)^{j},
$$
where $a_{j}(x)=(b(x)a^{(j)}(x))\textnormal{ rem } p(x)$ and $a^{(j)}(x)=a(x)^{j}\textnormal{ rem }f(x)$.
\end{lem}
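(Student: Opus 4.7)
The plan is to pivot on the identity
$$a_1^{(j)}(x) = a_1^{(j+1)}(x)\,f_1(x) + \tilde{a}_j(x)\,f_2(x), \qquad j\geq 0,$$
and then telescope.

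First I would establish this identity. Multiplying the B\'{e}zout relation (\ref{gcd_f}) by $a_1^{(j)}$ gives $a_1^{(j)} = (a_1\,a_1^{(j)})\,f_1 + (a_2\,a_1^{(j)})\,f_2$. Division with remainder writes $a_1\,a_1^{(j)} = a_1^{(j+1)} + B_j f_2$, where the remainder is indeed $a_1^{(j+1)}$ since $(a_1\,a_1^{(j)})\textnormal{ rem }f_2 = a_1^{j+1}\textnormal{ rem }f_2 = a_1^{(j+1)}$, and analogously $a_2\,a_1^{(j)} = \tilde{a}_j + p_j f_1$. Substituting back yields
$$a_1^{(j)} = a_1^{(j+1)} f_1 + \tilde{a}_j f_2 + (B_j + p_j)\,f_1 f_2.$$
The bounds $\deg a_1^{(j)}<\deg f_2$, $\deg a_1^{(j+1)}<\deg f_2$, and $\deg \tilde{a}_j<\deg f_1$ force the first three terms to have degree strictly less than $\deg(f_1 f_2)$, so the polynomial $(B_j + p_j)\,f_1 f_2$ must vanish identically. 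This establishes the identity.

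Next I would rearrange to $f_2\tilde{a}_j = a_1^{(j)} - a_1^{(j+1)} f_1$, multiply by $f_1^j$, and sum to obtain a telescoping cancellation:
$$f_2\sum_{j=0}^{k-1} \tilde{a}_j(x)\,f_1(x)^j = \sum_{j=0}^{k-1}\bigl(a_1^{(j)} f_1^j - a_1^{(j+1)} f_1^{j+1}\bigr) = 1 - a_1^{(k)}(x)\,f_1(x)^k,$$
using $a_1^{(0)} = 1\textnormal{ rem }f_2 = 1$ since $\deg f_2\geq 1$. Reducing modulo $f_1^k$ kills the second term, so $\sum_{j=0}^{k-1}\tilde{a}_j f_1^j$ is an inverse of $f_2$ modulo $f_1^k$. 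Since $\deg\tilde{a}_j<\deg f_1$ implies $\deg\bigl(\sum_{j=0}^{k-1}\tilde{a}_j f_1^j\bigr) < k\deg f_1 = \deg f_1^k$, Definition \ref{eval} identifies this sum with $\textnormal{eval}(1/f_2(x);\,f_1(x)^k)$, completing the proof.

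The main obstacle is the key identity; once it is in hand, the telescoping and the degree check are straightforward bookkeeping. What makes the identity go through is the cancellation of the $(B_j+p_j)\,f_1 f_2$ term via the degree argument, which is precisely what forces the ``rem $f_2$'' in the definition of $a_1^{(j)}$ and the ``rem $f_1$'' in the definition of $\tilde{a}_j$ to dovetail into an inverse modulo the power $f_1^k$.
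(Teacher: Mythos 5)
Your proof is correct. It rests on the same recursion as the paper's proof --- the pair of reductions $a_{1}^{(j)}\mapsto a_{1}^{(j+1)}=(a_{1}a_{1}^{(j)})\textnormal{ rem }f_{2}$ and $\tilde{a}_{j}=(a_{2}a_{1}^{(j)})\textnormal{ rem }f_{1}$ is exactly what drives the paper's inductive step --- but you package it differently. The paper argues by induction on $k$: it maintains the partial fraction $\frac{1}{f_{1}^{n}f_{2}}=\frac{\sum_{j<n}\tilde{a}_{j}f_{1}^{j}}{f_{1}^{n}}+\frac{a_{1}^{(n)}}{f_{2}}$, divides by $f_{1}$, reapplies the extended cover-up method (Theorem \ref{cover_up}) to the mixed term $\frac{a_{1}^{(n)}}{f_{1}f_{2}}$, and finally reads off the eval as the numerator sitting over $f_{1}^{k}$. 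You instead prove the one-step identity $a_{1}^{(j)}=a_{1}^{(j+1)}f_{1}+\tilde{a}_{j}f_{2}$ directly from B\'{e}zout plus a degree count, telescope it into the explicit relation $f_{2}\sum_{j=0}^{k-1}\tilde{a}_{j}f_{1}^{j}+a_{1}^{(k)}f_{1}^{k}=1$, and identify the sum with $\textnormal{eval}(1/f_{2};f_{1}^{k})$ via the degree bound and the uniqueness of inverses modulo $f_{1}^{k}$. Your route is self-contained (no appeal to the cover-up theorem or to the correspondence between partial-fraction numerators and eval) and produces an explicit B\'{e}zout identity for the pair $(f_{1}^{k},f_{2})$ as a by-product; the paper's route keeps the iterated-partial-fraction viewpoint in the foreground, which is the perspective it reuses elsewhere. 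Both arguments are valid, and your degree analysis (including $a_{1}^{(0)}=1$ and $\deg\bigl(\sum_{j}\tilde{a}_{j}f_{1}^{j}\bigr)<k\deg f_{1}$) is carried out correctly.
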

\begin{proof}[Proof.]
The idea is to apply the extended cover-up Theorem \ref{cover_up} in a reverse direction. That is, if we have a partial fraction 
\begin{equation}\label{pf_k_case}
\frac{1}{p(x)^{k}f(x)}=\frac{b_{1}(x)}{p(x)^{k}}+\frac{b_{2}(x)}{f(x)}.
\end{equation}
Then, $\textnormal{eval}(1/f(x); p(x)^{k})= b_{1}(x) \textnormal{ rem }p(x)^{k}$. Therefore, we set out to obtain a partial fraction of the form (\ref{pf_k_case}). By (\ref{gcd_f}) we have 
\begin{equation}\label{k1case}
\frac{1}{p(x)f(x)}=\frac{a(x) \textnormal{ rem }p(x)}{p(x)}+\frac{b(x)\textnormal{ rem }f(x)}{f(x)}.
\end{equation}
We prove by induction on $k$. The case $k=1$ holds by (\ref{k1case}). Suppose the result holds for $k=n$, that is
$$
\frac{1}{p(x)^{n}f(x)}=\frac{\sum_{j=0}^{n-1}a_{j}(x)p(x)^{j}}{p(x)^{n}}+\frac{a^{(n)}(x)}{f(x)},
$$
where $a_{j}(x)=b(x)a(x)^{j}\textnormal{ rem } p(x)$ and $a^{(n)}(x)=a(x)^{n}\textnormal{ rem }f(x).$ Dividing both sides by $p(x)$ we get 
$$
\frac{1}{p(x)^{n+1}f(x)}=\frac{\sum_{j=0}^{n-1}a_{j}(x)p(x)^{j}}{p(x)^{n+1}}+\frac{a^{(n)}(x)}{p(x)f(x)}.
$$
Performing partial fraction decomposition on the second term using the extended cover-up method we have 
\begin{eqnarray}
\nonumber \frac{a^{(n)}(x)}{p(x)f(x)}&=&\frac{a^{(n)}(x)a(x)\textnormal{ rem }p(x)}{p(x)}+\frac{a^{(n)}(x)a(x)\textnormal{ rem } f(x)}{f(x)}\\
\nonumber &=& \frac{a_{n+1}(x)}{p(x)}+\frac{a^{(n+1)}(x)}{f(x)},
\end{eqnarray}
where $a_{n+1}(x)=\sum_{j=0}^{n}a_{j}(x)p(x)^{j}$ and $a^{(n+1)}(x)=a(x)^{n+1}\textnormal{ rem }f(x).$ 
Hence the result is proved.  
\end{proof}

\begin{cor}[Eval-Taylor Series]\label{eval_taylor}
Let $p(x)=(x-a)$ and let the Taylor series of $1/f(x)$ centered about $x=a$, with $f(a)\neq 0$, be given by 
\begin{equation}\label{an_coeff}
\frac{1}{f(x)}=\sum_{n=0}^{\infty} a_{n}(x-a)^{n}.
\end{equation}
Then, 
$$
\textnormal{eval}\left(\frac{1}{f_{2}(x)};(x-a)^{k}\right)=\sum_{n=0}^{k-1}a_{n}(x-a)^{n}.
$$
\end{cor}

In case we have a product of multiple polynomials $f_{1}(x),\cdots,f_{m}(x)$ then by Lemma \ref{lem_res}(3) and Theorem \ref{eval_taylor} we can express the result as a sum of products of the associated coefficients. 

\begin{cor}[Sum-of-Products]\label{sum_of_products}
Let the Taylor series expansion of $1/f_{j}(x)$ centered about $x=a$, with $f_{j}(a)\neq 0$, be given by 
\begin{equation}\label{an_coeff}
\frac{1}{f_{j}(x)}=\sum_{n=0}^{\infty} a_{n}^{(j)}(x-a)^{n},
\end{equation}
for $j=1,\cdots,m$. Then, we have 
$$
\textnormal{eval}\left(\frac{1}{f_{1}(x)\cdots f_{m}(x)}; (x-a)^{k}\right)=\sum_{\stackrel{j=0}{j_{1}+\cdots+j_{m}=j}}^{k-1}a_{j_{1}}^{(1)}\cdots a_{j_{m}}^{(m)}(x-a)^{j}.
$$
\end{cor}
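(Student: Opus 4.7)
The plan is to reduce the Corollary directly to Theorem \ref{eval_taylor} by observing that the reciprocal of the product $f_{2}(x)\cdots f_{m}(x)$ itself admits a Taylor series centered at $x=a$, whose coefficients are precisely the sum-of-products that appear on the right-hand side. The Corollary then follows immediately from the single-denominator version.

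Concretely, the first step is to note that the hypothesis $a_{0}^{(j)}\neq 0$ forces $f_{j}(a)\neq 0$ for each $j=2,\ldots,m$, so the product $F(x) := f_{2}(x)\cdots f_{m}(x)$ also satisfies $F(a)\neq 0$ and the reciprocal $1/F(x)$ is analytic at $x=a$. The second step is to compute its Taylor coefficients via the Cauchy product of the $m-1$ given Taylor expansions:
$$
\frac{1}{F(x)}=\prod_{j=2}^{m}\sum_{n=0}^{\infty}a_{n}^{(j)}(x-a)^{n}=\sum_{n=0}^{\infty}\left(\sum_{j_{2}+\cdots+j_{m}=n}a_{j_{2}}^{(2)}\cdots a_{j_{m}}^{(m)}\right)(x-a)^{n}.
$$
The third step is to feed this Taylor expansion into Theorem \ref{eval_taylor} with $f_{1}(x)=x-a$ and $f_{2}(x)$ replaced by $F(x)$; that theorem asserts the eval equals the first $k$ terms of the Taylor series, which is exactly the claimed sum-of-products.

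An alternative route, which I would keep in reserve in case one prefers to avoid invoking Cauchy products, is to induct on $m$ using the multiplicativity property Lemma \ref{lem_res}(3). The base case $m=2$ is Theorem \ref{eval_taylor} itself. For the inductive step, one writes $\textnormal{eval}(1/(f_{2}\cdots f_{m+1});(x-a)^{k})$ as the remainder modulo $(x-a)^{k}$ of the product of $\textnormal{eval}(1/(f_{2}\cdots f_{m});(x-a)^{k})$ and $\textnormal{eval}(1/f_{m+1};(x-a)^{k})$, then applies the inductive hypothesis and Theorem \ref{eval_taylor} to each factor, and finally truncates the resulting polynomial product at degree $k-1$.

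No step is genuinely hard: the only mild subtlety is bookkeeping the truncation modulo $(x-a)^{k}$, since terms of total degree $\geq k$ in the formal product of the two series must be discarded. In the Cauchy-product approach this is automatic because Theorem \ref{eval_taylor} is applied once at the end; in the inductive approach one must verify that discarding the tail $(x-a)^{k} p(x)/q(x)$ with $q(a)\neq 0$ does not affect the eval, which is exactly the content invoked inside the proof of Theorem \ref{eval_taylor}. I would therefore present the Cauchy-product version as the main argument.
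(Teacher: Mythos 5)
Your proposal is correct. The paper itself gives no displayed proof of this corollary: it is introduced with the single remark that the result follows ``by Lemma \ref{lem_res}(3)'', i.e.\ by the multiplicativity of eval applied to the factors $1/f_{j}(x)$ together with Theorem \ref{eval_taylor} -- which is exactly your reserve argument (induction on $m$, multiplying the truncated series and reducing modulo $(x-a)^{k}$). Your headline argument is a genuinely different, and arguably cleaner, packaging: you observe that $a_{0}^{(j)}\neq 0$ forces $f_{j}(a)\neq 0$, hence $F(a)\neq 0$ for $F=f_{2}\cdots f_{m}$, form the Cauchy product of the individual expansions to get the Taylor series of $1/F(x)$ at $x=a$, and then invoke Theorem \ref{eval_taylor} exactly once on the single polynomial $F(x)$. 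This buys you a one-shot truncation -- all the degree-$\geq k$ bookkeeping is absorbed into the already-proved tail argument of Theorem \ref{eval_taylor} -- at the cost of the (standard, but worth stating) fact that the product of convergent Taylor series is the Taylor series of the product. The paper's route stays entirely inside the algebraic eval calculus and never mentions analyticity, which is more in the spirit of the formalism, but requires one to check at each inductive step that truncating the polynomial product modulo $(x-a)^{k}$ is harmless; you correctly identify that this is the same tail-discarding step used inside Theorem \ref{eval_taylor}. Either version is a complete proof.
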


\section{Cyclotomic Polynomials and Fourier Analysis}

In this section we discuss the fruitful interaction between eval operator and the cyclotomic polynomials. The $n^{th}$ cyclotomic polynomial in $\mathbb{Q}[x]$ denoted $\Phi_{n}(x)$ is a monic irreducible polynomial whose roots are the $n^{th}$ primitive roots of unity and satisfy 
\begin{equation}\label{factor_phi}
1-x^{m}=\prod_{d|m} \Phi_{d}(x).
\end{equation}
The degree of $\Phi_{n}(x)$ is $\phi(n)$ the Euler's totient function. We also denote $\Psi_{m}(x)$ as 
$$
\Psi_{m}(x)=\frac{1-x^m}{1-x}=1+x+\cdots+x^{m-1}.
$$
In Section \ref{sec_W1W2}, we observe that the polynomial $\Psi_{m}(x)$ has an algebraic connection to the degenerate Bernoulli and degenerate Euler numbers. 

\begin{rema}
Note that many authors consider $\Phi_{1}(x)=x-1$. But, in view of factors in the generating function $F_{N}(x)$ we take $\Phi_{1}(x)=1-x$. 
\end{rema}

Let us denote the primitive $n^{th}$ roots of unity by 
$$
\Delta_{n}=\{\xi: \Phi_{n}(\xi)=0\}.
$$
The inverse cyclotomic polynomial, denoted $\Theta_{k}(x)$, is defined by Moree \cite{Moree} as 
$
1-x^{k}=\Theta_{k}(x)\Phi_{k}(x).
$
Therefore, $\Theta_{k}(\xi)\neq 0$ if $\xi\in \Delta_{k}$. In \cite{Moree}, it was shown that the coefficients of $\Theta_{k}(\xi)$ also have very good symmetric properties. 

Distinct cyclotomic polynomials are pairwise relatively prime, that is
$
\textnormal{gcd}(\Phi_{m}(x),\Phi_{n}(x))=1, \textnormal{for } m\neq n.
$
As a consequence we can express the product as 
\begin{eqnarray}
\nonumber \prod_{k=1}^{N}(1-x^{k})&=& \Phi_{1}(x)^{N}\Phi_{2}(x)^{\lfloor N/2\rfloor}\cdots\Phi_{\lfloor N/2\rfloor}(x)^{2}\Phi_{\lfloor N/2\rfloor+1}(x)\cdots \Phi_{N}(x)\\
\nonumber &=& \prod_{k=1}^{N}\Phi_{k}(x)^{\lfloor N/k\rfloor}.
\end{eqnarray}

In view of the above equation, to perform $q$-partial fraction expansion of $F_{N}(x)$ with expressions such as (\ref{hNk}) coming from the extended cover-up method we need to first obtain eval of power of cyclotomic polynomials. 

\begin{lem}[Dresden \cite{dresden}]
Let $m<n$ positive integers. Then, we have 
\begin{equation}\label{eval_mn}
\Phi_{m}(x)u(x)+\Phi_{n}(x)v(x)=1.
\end{equation}
The polynomials $u(x)$ and $v(x)$ can be obtained for the following cases:
\begin{description}
\item[Case $m\nmid n$] Let $d=\textnormal{gcd}(m,n)$ with $d=ns-mt$. Then, 
$$
u(x)=\frac{(-x)^{d}(x^{mt}-1)}{(x^{d}-1)\Phi_{m}(x)}\ \textnormal{ and }\ v(x)=\frac{x^{ns}-1}{(x^{d}-1)\Phi_{n}(x)}.
$$
\item[Case $m\mid n$] 
$$
u(x)=\frac{-\left(\Phi_{n/m}(x^{m})-\Phi_{n/m}(1)\right)}{\Phi_{n/m}(1)\Phi_{m}(x)}\ \textnormal{  and  }\  v(x)= \frac{\Phi_{n/m}(x^{m})}{\Phi_{n/m}(1)\Phi_{n}(x)}.
$$
\end{description}
Moreover, $\Phi_{n/m}(1)=p$ if $n/m=p^r$ for some prime $p$ and $r>0$. Otherwise $\Phi_{n/m}(1)=1$.
\end{lem}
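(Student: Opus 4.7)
The plan is to verify Dresden's identity $\Phi_m u + \Phi_n v = 1$ in each case by establishing two things: that the proposed $u(x)$ and $v(x)$ are honest polynomials (not merely rational functions), and that the identity holds by direct algebraic manipulation after clearing denominators. The root-based characterization of $\Phi_k$ drives both steps.

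For the divisible case $m \mid n$, I would first check that $v(x) = \Phi_{n/m}(x^m)/(\Phi_{n/m}(1)\Phi_n(x))$ is a polynomial. The key fact is that if $\zeta$ is a primitive $n$-th root of unity, then $\zeta^m$ has order $n/m$ and hence is a primitive $(n/m)$-th root of unity, so $\Phi_{n/m}(\zeta^m)=0$. Since the roots of $\Phi_n$ are simple, this gives $\Phi_n(x)\mid \Phi_{n/m}(x^m)$. For $u(x)$, any primitive $m$-th root $\zeta$ satisfies $\zeta^m=1$, so $\Phi_{n/m}(\zeta^m)-\Phi_{n/m}(1)=0$, showing $\Phi_m(x)\mid \Phi_{n/m}(x^m)-\Phi_{n/m}(1)$. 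With $u,v$ polynomial, the identity is immediate: after multiplying through by $\Phi_{n/m}(1)$, the sum collapses to $-\bigl(\Phi_{n/m}(x^m)-\Phi_{n/m}(1)\bigr)+\Phi_{n/m}(x^m)=\Phi_{n/m}(1)$.

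For the non-divisible case $m\nmid n$, set $d=\gcd(m,n)<m$ and choose integers $s,t$ with $d=ns-mt$. To see that $v(x)$ is a polynomial, use $x^{ns}-1=\prod_{d'\mid ns}\Phi_{d'}(x)$ and $x^d-1=\prod_{d'\mid d}\Phi_{d'}(x)$; since $d\mid ns$, the second divides the first, and $\Phi_n$ is among the remaining factors because $n\mid ns$ while $n\nmid d$. The symmetric argument with $mt$ in place of $ns$ handles $u(x)$, using $m\mid mt$ but $m\nmid d$. The identity $\Phi_m u+\Phi_n v=1$ then reduces, after clearing denominators, to a polynomial identity in which $x^{ns}$ and $x^{mt}$ are related by $x^{ns}=x^d\cdot x^{mt}$; expanding and using the chosen sign convention on $(-x)^d$ shows the two fractions combine so the numerator becomes exactly $x^d-1$.

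The trailing claim about $\Phi_{n/m}(1)$ is a well-known evaluation, which I would derive as follows: setting $x=1$ in $1+x+\cdots+x^{N-1}=\prod_{d\mid N,\,d>1}\Phi_d(x)$ gives $N=\prod_{d\mid N,\,d>1}\Phi_d(1)$, and a short induction (or Möbius inversion) on the divisor lattice yields $\Phi_N(1)=p$ when $N=p^r$ is a prime power and $\Phi_N(1)=1$ otherwise. I expect the main obstacle to be the bookkeeping in the non-divisible case: one must identify precisely which cyclotomic factors of $x^{ns}-1$ and $x^{mt}-1$ survive after dividing out $x^d-1$ together with $\Phi_m$ or $\Phi_n$, and confirm the cancellations leave nothing spurious in the denominator before the final algebraic simplification.
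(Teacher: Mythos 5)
The paper offers no proof of this lemma---it is quoted from Dresden's article---so your from-scratch verification is the only argument on the table, and its architecture (establish that the candidate $u,v$ are polynomials by root counting, then verify the identity by a telescoping computation after clearing denominators) is the right one and is essentially how Dresden argues. Your polynomiality steps are sound: for $m\mid n$, a primitive $n$-th root $\zeta$ gives $\zeta^{m}$ of exact order $n/m$, and squarefreeness of $\Phi_{n}$ yields $\Phi_{n}(x)\mid\Phi_{n/m}(x^{m})$, while $y-1\mid P(y)-P(1)$ handles $u$; for $m\nmid n$, the factorization $x^{K}-1=\prod_{d'\mid K}\Phi_{d'}(x)$ together with $d\mid mt$, $d\mid ns$, $m\nmid d$, $n\nmid d$ does the job. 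The evaluation of $\Phi_{N}(1)$ is also handled correctly.

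The genuine gap is exactly the step you deferred. In the case $m\nmid n$, clearing denominators reduces the claim to
$$(-x)^{d}\,(x^{mt}-1)+(x^{ns}-1)=x^{d}-1.$$
Writing $x^{ns}=x^{d}x^{mt}$ gives $x^{ns}-1=x^{d}(x^{mt}-1)+(x^{d}-1)$, so the left side equals $\bigl((-1)^{d}+1\bigr)x^{d}(x^{mt}-1)+(x^{d}-1)$, which equals $x^{d}-1$ only when $d$ is odd. For even $d$ the stated formula fails: with $m=4$, $n=6$, $d=2$, $s=t=1$ one gets $u=x^{2}$, $v=x^{2}+x+1$, and $\Phi_{4}(x)x^{2}+\Phi_{6}(x)(x^{2}+x+1)=2x^{4}+2x^{2}+1\neq 1$. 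The correct factor is $-x^{d}$ rather than $(-x)^{d}$ (with $u(x)=-x^{d}(x^{mt}-1)/\bigl((x^{d}-1)\Phi_{m}(x)\bigr)$ the telescoping closes for all $d$); the statement as transcribed contains a sign slip that your phrase ``using the chosen sign convention on $(-x)^{d}$'' papers over instead of detecting---carrying out the two-line computation is precisely what would have caught it. Two smaller points to record: $s,t$ can be chosen positive (replace $(s,t)$ by $(s+jm/d,\,t+jn/d)$) so that $x^{mt}-1$ and $x^{ns}-1$ are genuine polynomial numerators, and $\Phi_{n/m}(1)\neq 0$ because $n/m\geq 2$, so the division in the case $m\mid n$ is legitimate.
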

From the above lemma we have 
\begin{eqnarray}
\label{alpha}\alpha(x)&=&\textnormal{eval}\left(\frac{1}{\Phi_{m}(x)}; \Phi_{n}(x)\right)=u(x)\textnormal{ rem }\Phi_{n}(x)\\
\label{beta}\beta(x)&=&\textnormal{eval}\left(\frac{1}{\Phi_{n}(x)}; \Phi_{m}(x)\right)=v(x)\textnormal{ rem }\Phi_{m}(x),
\end{eqnarray}
where $u(x),v(x)$ are given in (\ref{eval_mn}). By Lemma \ref{eval_power_k_case} we have the following corollary. 

\begin{cor}\label{eval_power_cyclo}
For $m,n$ distinct positive integers and $k$ is a positive integer we have
$$
\textnormal{eval}\left(\frac{1}{\Phi_{n}(x)};\Phi_{m}(x)^{k}\right)=\sum^{k-1}_{j=0}\tilde{a}_{j}(x)\Phi_{m}(x)^{j},
$$
where $\tilde{a}_{j}(x)=\left(\beta(x)\alpha^{(j)}(x)\right)\textnormal{ rem }\Phi_{m}(x)$ and $\alpha^{(j)}(x)=\alpha(x)^{j}\textnormal{ rem }\Phi_{n}(x)$, and $\alpha(x)$ and $\beta(x)$ are given in (\ref{alpha}) and (\ref{beta}) respectively. 
\end{cor}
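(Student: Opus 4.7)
The plan is to directly invoke Lemma \ref{eval_power_k_case} with $f_1(x)=\Phi_m(x)$ and $f_2(x)=\Phi_n(x)$, then translate its conclusion into the notation of the corollary. First I would verify the hypotheses: both polynomials are non-trivial elements of $\mathbb{Q}[x]$, and for distinct positive integers $m \neq n$ the cyclotomic polynomials $\Phi_m(x)$ and $\Phi_n(x)$ are pairwise relatively prime (a fact already recalled in the excerpt). Hence a B\'ezout identity of the required form exists; indeed, Dresden's lemma supplies explicit polynomials $u(x),v(x) \in \mathbb{Q}[x]$ satisfying $u(x)\Phi_m(x) + v(x)\Phi_n(x) = 1$, which play the roles of $a_1(x)$ and $a_2(x)$ in Lemma \ref{eval_power_k_case}.

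Plugging in gives
$$
\textnormal{eval}\left(\frac{1}{\Phi_n(x)};\Phi_m(x)^k\right) \;=\; \sum_{j=0}^{k-1} \tilde{a}_j(x)\,\Phi_m(x)^j,
$$
with $\tilde{a}_j(x) = \bigl(v(x)\,a_1^{(j)}(x)\bigr)\textnormal{ rem }\Phi_m(x)$ and $a_1^{(j)}(x) = u(x)^j \textnormal{ rem }\Phi_n(x)$. All that remains is to recast $v, u$ in terms of $\beta, \alpha$. By (\ref{alpha}) and (\ref{beta}), one has $\alpha(x) \equiv u(x) \pmod{\Phi_n(x)}$ and $\beta(x)\equiv v(x)\pmod{\Phi_m(x)}$. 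Consequently $\alpha(x)^j \equiv u(x)^j \pmod{\Phi_n(x)}$, so $\alpha^{(j)}(x) = \alpha(x)^j \textnormal{ rem }\Phi_n(x)$ coincides with $a_1^{(j)}(x)$. Similarly, since $v(x) \equiv \beta(x)\pmod{\Phi_m(x)}$, reducing the product modulo $\Phi_m(x)$ gives $\bigl(v(x)\alpha^{(j)}(x)\bigr)\textnormal{ rem }\Phi_m(x) = \bigl(\beta(x)\alpha^{(j)}(x)\bigr)\textnormal{ rem }\Phi_m(x)$, which is precisely the definition of $\tilde{a}_j(x)$ in the statement.

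There is no real obstacle here: the corollary is a clean specialization of Lemma \ref{eval_power_k_case} to cyclotomic factors, with the added observation that Dresden's lemma makes the B\'ezout coefficients algorithmically explicit. The only point worth stating carefully is the equivalence $(v\cdot\alpha^{(j)})\textnormal{ rem }\Phi_m = (\beta\cdot\alpha^{(j)})\textnormal{ rem }\Phi_m$, which justifies rewriting the coefficient in the $\alpha,\beta$ notation used throughout Section 3.
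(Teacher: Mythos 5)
Your proof is correct and matches the paper's own derivation: the paper states the corollary as an immediate consequence of Lemma \ref{eval_power_k_case} applied to $f_1=\Phi_m$, $f_2=\Phi_n$, with the B\'ezout identity supplied by Dresden's lemma and the coefficients recast via (\ref{alpha}) and (\ref{beta}) exactly as you do. Your explicit check that $\alpha^{(j)}$ coincides with $u^j\ \textnormal{rem}\ \Phi_n$ and that replacing $v$ by $\beta$ is harmless modulo $\Phi_m$ is the only content needed, and you have it.
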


Notice that in the above expressions one needs to perform modulo a cyclotomic polynomial. The following lemma along with Lemma \ref{lem_res}(2) and \ref{lem_res}(3) show that this can be done quite efficiently. 

\begin{lem} Let $m>1,k>0$ be integers. Then, we have 
$$
\textnormal{eval}\left(x^{k}; \Phi_{m}(x)\right)= x^{k\%m} \textnormal{ rem }\Phi_{m}(x).
$$
Moreover, if $m=2n$ then 
$
\textnormal{eval}\left(x^{k+n}; \Phi_{m}(x)\right)= -\textnormal{eval}\left(x^{k}; \Phi_{m}(x)\right), 
$
for $0\leq k<n.$
\end{lem}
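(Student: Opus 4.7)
My plan is to handle the two assertions separately, and in both cases the core issue reduces to a divisibility statement involving $\Phi_m(x)$ that comes directly from the cyclotomic factorization (\ref{factor_phi}).

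For the first assertion, since there is no non-trivial denominator, Lemma \ref{lem_res}(1) immediately gives
$\textnormal{eval}(x^{k};\Phi_{m}(x)) = x^{k}\textnormal{ rem }\Phi_{m}(x)$,
so the content of the claim is the congruence $x^{k}\equiv x^{k\%m}\pmod{\Phi_{m}(x)}$. Writing $k = qm + (k\%m)$, this will follow as soon as I establish $x^{m}\equiv 1 \pmod{\Phi_{m}(x)}$. But (\ref{factor_phi}) expresses $1-x^{m}$ as a product of cyclotomic polynomials one of whose factors is $\Phi_{m}(x)$, so $\Phi_{m}(x)\mid (1-x^{m})$ and the congruence is immediate. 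Raising to the $q$-th power and multiplying by $x^{k\%m}$ closes the argument.

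For the second assertion, with $m = 2n$ I need to show $x^{k+n}\equiv -x^{k}\pmod{\Phi_{2n}(x)}$, and it suffices to prove $\Phi_{2n}(x)\mid (x^{n}+1)$. For this I would use the factorization
\[
x^{2n}-1 = (x^{n}-1)(x^{n}+1),
\]
together with the cyclotomic decompositions $x^{2n}-1 = \prod_{d\mid 2n}\Phi_{d}(x)$ and $x^{n}-1 = \prod_{d\mid n}\Phi_{d}(x)$. Since $2n\nmid n$ for $n\geq 1$, the factor $\Phi_{2n}(x)$ appears on the left-hand side of the first factorization but not in $x^{n}-1$; by pairwise coprimality of distinct cyclotomic polynomials, $\Phi_{2n}(x)$ must divide the remaining factor $x^{n}+1$. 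Hence $x^{n}\equiv -1\pmod{\Phi_{2n}(x)}$, and multiplying by $x^{k}$ yields the claimed identity. The hypothesis $0\leq k < n$ is used only to ensure $k+n < m$, so that neither side requires a preliminary reduction via the first assertion.

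There is no real obstacle here: both parts rest on identifying the correct cyclotomic divisibility, and everything else is bookkeeping with Lemma \ref{lem_res}. If anything, the only mild subtlety is making explicit that pairwise coprimality of distinct $\Phi_{d}(x)$ lets one cancel $x^{n}-1$ cleanly from $x^{2n}-1$ to isolate the factor containing $\Phi_{2n}(x)$.
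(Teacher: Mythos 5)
Your proof is correct and follows essentially the same route as the paper: reduce exponents using $\Phi_{m}(x)\mid(x^{m}-1)$ for the first claim, and use $\Phi_{2n}(x)\mid(x^{n}+1)$ for the second. The only difference is that you spell out the divisibility $\Phi_{2n}(x)\mid(x^{n}+1)$ via the factorization $x^{2n}-1=(x^{n}-1)(x^{n}+1)$ and coprimality of distinct cyclotomic polynomials, a step the paper dismisses as ``easy to see.''
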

\begin{proof}[Proof.]
Suppose $j=md+r$, for $0\leq r<m$. Then, the result follows from $\Phi_{m}(x)\mid (x^m-1)$ and 
$
x^{md+r}=((x^{m}-1)+1)^{d}x^{r}=q(x)(x^{m}-1)+x^{r}. 
$ Furthermore, it is easy to see that 
$
\textnormal{eval}(x^n+1;\Phi_{2n}(x))=(x^n+1)\textnormal{ rem }\Phi_{2n}(x)=0.
$  
\end{proof}

\begin{cor}[\cite{uk}]
Let $p$ be prime. 
$$
\textnormal{eval}(x^{j};\Phi_{p}(x))=x^{j} \textnormal{ rem } \Phi_{p}(x)= \left\{\begin{array}{@{}l@{\thinspace}l}
       x^{j\%p} & \textnormal{   if   } j\%p \neq p-1 \\
       -\sum^{p-2}_{i=0}x^{i}  & \textnormal{   if   } j\%p=p-1. \\
     \end{array}\right.
$$
\end{cor}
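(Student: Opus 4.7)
The plan is to invoke the preceding lemma, which already gives $\textnormal{eval}(x^j;\Phi_m(x)) = x^{j\%m} \textnormal{ rem } \Phi_m(x)$ for any $m>1$. Specializing to $m=p$ prime, the task reduces to computing $x^r \textnormal{ rem } \Phi_p(x)$ for $r = j\%p \in \{0,1,\ldots,p-1\}$. I then use the explicit form of the $p$-th cyclotomic polynomial for $p$ prime, namely
$$
\Phi_p(x) = 1 + x + x^2 + \cdots + x^{p-1},
$$
which follows immediately from $1-x^p = (1-x)\Phi_p(x)$, i.e.\ from (\ref{factor_phi}) with the only divisors of $p$ being $1$ and $p$.

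Having $\Phi_p(x)$ in this explicit form, the argument splits into two cases according to whether $r < p-1$ or $r = p-1$. In the first case, $\deg(x^r) = r < p-1 = \deg(\Phi_p)$, so $x^r$ is already its own remainder modulo $\Phi_p(x)$, yielding the first branch of the stated formula. In the second case, i.e.\ $r = p-1$, I would rearrange the identity $\Phi_p(x) = 1 + x + \cdots + x^{p-1}$ to obtain
$$
x^{p-1} = \Phi_p(x) - \sum_{i=0}^{p-2} x^{i},
$$
so that reducing modulo $\Phi_p(x)$ kills the first term and gives exactly $-\sum_{i=0}^{p-2}x^i$, which is the second branch of the formula.

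There is no real obstacle here: the result is essentially a specialization of the preceding lemma combined with the elementary fact that $\Phi_p(x)$ is the $p$-th cyclotomomic polynomial in its explicit monic form. The only minor care needed is to ensure the reduction is carried out in the correct sign convention (recall the paper adopts $\Phi_1(x) = 1-x$, which is consistent with $(1-x)\Phi_p(x) = 1-x^p$, so the constant term of $\Phi_p$ is $+1$, matching the formula above).
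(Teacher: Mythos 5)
Your proposal is correct and is exactly the intended derivation: the paper states this as an immediate corollary of the preceding lemma (reducing the exponent modulo $p$) combined with the explicit form $\Phi_p(x)=1+x+\cdots+x^{p-1}$, and your two-case reduction, including the sign check for the $r=p-1$ branch, matches that route. No gaps.
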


\begin{lem} [\cite{uk}]\label{lem_rem_xm}
For $k\geq 0$ and $m>1$, 
$
\textnormal{eval}(x^{k};1-x^{m})=x^{k\%m}.
$
\end{lem}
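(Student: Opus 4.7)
The plan is to reduce the statement to a direct computation using the definition of \textnormal{eval} and a simple divisibility identity for $x^m-1$.

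First I would unpack the definition. Since the denominator in the \textnormal{eval} expression is $s(x)=1$, the B\'ezout identity $\alpha(x)\cdot 1 + \beta(x)(1-x^m)=1$ is trivially satisfied by $\alpha(x)=1$, $\beta(x)=0$. Hence by Definition \ref{eval} (equivalently, by Lemma \ref{lem_res}(1)),
$$
\textnormal{eval}(x^{k};1-x^{m})=\textnormal{rem}(x^{k};1-x^{m}),
$$
so it remains to show that $x^{k\%m}$ is the polynomial remainder of $x^k$ upon division by $1-x^m$.

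Next I would write $k=qm+r$ with $r=k\%m$, so $0\le r<m$. The key algebraic observation is that $1-x^m$ divides $x^{qm}-1$, since
$$
x^{qm}-1 = (x^{m}-1)\sum_{i=0}^{q-1}x^{mi} = -(1-x^{m})\sum_{i=0}^{q-1}x^{mi}.
$$
Multiplying this divisibility by $x^r$ gives
$$
x^{k}-x^{r} = x^{r}\bigl(x^{qm}-1\bigr) \equiv 0 \pmod{1-x^{m}},
$$
so $x^{k}\equiv x^{r}\pmod{1-x^{m}}$. Finally, since $\deg(x^{r})=r<m=\deg(1-x^{m})$, the polynomial $x^{r}=x^{k\%m}$ is the unique remainder, which is the desired conclusion.

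There is essentially no serious obstacle here; the statement is a direct consequence of the identity $x^m\equiv 1\pmod{1-x^m}$ together with the degree bound on the remainder. The only point worth being explicit about is the reduction from \textnormal{eval} to \textnormal{rem}, which relies on the trivial B\'ezout identity available because the denominator is the constant polynomial $1$.
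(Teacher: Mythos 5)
Your proof is correct and follows essentially the same route as the paper: the paper (in its proof of the companion lemma for $\Phi_m(x)$) writes $x^{md+r}=((x^m-1)+1)^d x^r = q(x)(x^m-1)+x^r$, which is the same explicit witness of $x^k\equiv x^{k\%m}\pmod{1-x^m}$ that you obtain via the geometric-sum factorization of $x^{qm}-1$, followed by the same degree bound on the remainder. Your explicit reduction of eval to rem via the trivial B\'ezout identity is a point the paper leaves to Lemma \ref{lem_res}(1), but it is the same mechanism.
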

We now perform iterated partial fractions for the purpose of obtaining an algebraic approach to degenerate Euler numbers (discussed in Section \ref{sec_W1W2}). Similar approach for degenerate Bernoulli number is given in \cite{uk}. One can easily deduce the following partial fraction

\begin{equation}\label{pf_case_k_1}
\frac{1}{(1-x)(1+x^m)}=\frac{1}{2(1-x)}+\frac{\Psi_{m}(m)}{2(1+x^m)}.
\end{equation}
Proceeding inductively we obtain the following result. 
\begin{lem}\label{pow_k}
For $k\geq 1$ the following partial fraction holds:
\begin{equation}\label{pf_k}
\frac{1}{(1-x)^{k}(1+x^m)} = \sum_{j=0}^{k-1}\frac{g_{j}^{(m)}(1)}{2(1-x)^{k-j}}+\frac{g_{k}^{(m)}(x)}{1+x^m},
\end{equation}
where $g_{j}^{(m)}(x)=\left(\frac{1}{2}\Psi_{m}(x)\right)^{j}\textnormal{ rem } (1+x^m)$ and $g_{0}^{(m)}(x)=1$.
\end{lem}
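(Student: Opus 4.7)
The plan is to prove the identity by induction on $k$. The base case $k=1$ follows immediately by multiplying equation (\ref{pf_case_k_1}) by $2$: one has $g_0^{(m)}(1) = 1$ and $g_1^{(m)}(x) = \Psi_m(x) \textnormal{ rem }(1+x^m) = \Psi_m(x)$ (since $\deg \Psi_m = m-1 < m$), so (\ref{pf_k}) at $k=1$ reduces exactly to $\frac{1}{1-x} + \frac{\Psi_m(x)}{1+x^m}$, matching the doubled (\ref{pf_case_k_1}).

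For the inductive step, I would assume (\ref{pf_k}) at level $k$ and divide both sides by $(1-x)$. The sum $\sum_{j=0}^{k-1} g_j^{(m)}(1)/(1-x)^{k-j}$ becomes $\sum_{j=0}^{k-1} g_j^{(m)}(1)/(1-x)^{k+1-j}$, which furnishes the first $k$ summands of the target identity at level $k+1$. The remaining piece, $g_k^{(m)}(x)/[(1-x)(1+x^m)]$, I would decompose by the extended cover-up method (Theorem \ref{cover_up}). The $(1-x)$-coefficient is $\textnormal{eval}(g_k^{(m)}(x)/(1+x^m); 1-x) = g_k^{(m)}(1)/2$; this supplies the missing $j=k$ summand (modulo a constant that must be tracked).

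For the $(1+x^m)$-coefficient, I need $\textnormal{eval}(g_k^{(m)}(x)/(1-x); 1+x^m)$. The key observation is that $(1-x)\Psi_m(x) = 1 - x^m \equiv 2 \pmod{1+x^m}$, hence $\textnormal{eval}(1/(1-x); 1+x^m) = \Psi_m(x)/2$. Applying Lemma \ref{lem_res}(3), the $(1+x^m)$-coefficient becomes $g_k^{(m)}(x)\Psi_m(x)/2 \textnormal{ rem }(1+x^m) = g_{k+1}^{(m)}(x)/2$, which is precisely the recurrence $g_{k+1}^{(m)} = \Psi_m \cdot g_k^{(m)} \textnormal{ rem }(1+x^m)$ implicit in the definition. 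This matching of the recurrence on the numerator side is the structural heart of the induction.

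The main obstacle, and the step that deserves the most care, is the bookkeeping of the scalar factors introduced by the repeated cover-up: each step produces a factor of $1/2$ in both the new $(1-x)^{-1}$ summand and the new $(1+x^m)^{-1}$ summand, and these must be reconciled with the normalization carried on the left-hand side of (\ref{pf_k}). Once the constants are tracked cleanly (a short induction on powers of $2$ that rides alongside the main induction), the identity closes using only the $g$-recurrence and the basic properties of eval in Lemma \ref{lem_res}.
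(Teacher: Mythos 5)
Your strategy is the same as the paper's: induct on $k$, divide the level-$k$ identity by $(1-x)$, and split the residual term $g_{k}^{(m)}(x)/[(1-x)(1+x^m)]$ by the extended cover-up method. Your two eval computations are correct --- $\textnormal{eval}\bigl(g_{k}^{(m)}(x)/(1+x^m);1-x\bigr)=g_{k}^{(m)}(1)/2$ and, since $(1-x)\Psi_{m}(x)=1-x^{m}\equiv 2 \pmod{1+x^m}$, $\textnormal{eval}\bigl(g_{k}^{(m)}(x)/(1-x);1+x^m\bigr)=\tfrac{1}{2}\Psi_{m}(x)g_{k}^{(m)}(x)\textnormal{ rem }(1+x^m)$ --- and in this respect you are more careful than the paper's own proof, which performs the same cover-up step but silently drops both factors of $1/2$.

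The gap is your concluding claim that the constants ``close.'' They do not, for the statement as written. Each cover-up step introduces a fresh factor $1/2$ into \emph{both} new summands, and nothing on the left-hand side of (\ref{pf_k}) absorbs it: after $k-1$ steps the $j$-th summand carries $g_{j}^{(m)}(1)/2^{j}$ and the final term carries $g_{k}^{(m)}(x)/2^{k-1}$, so with $g_{j}^{(m)}(x)=(\Psi_{m}(x))^{j}\textnormal{ rem }(1+x^m)$ the identity (\ref{pf_k}) fails for every $k\ge 2$. Concretely, for $m=2$, $k=2$ one has $g_{1}^{(2)}(1)=2$ and $g_{2}^{(2)}(x)=2x$, and the right-hand side of (\ref{pf_k}) equals $(3-x^{2})/[(1-x)^{2}(1+x^{2})]$, not $2/[(1-x)^{2}(1+x^{2})]$. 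The ``short induction on powers of $2$'' you defer is precisely where the statement must be repaired rather than confirmed: either redefine the sequence by $g_{j+1}^{(m)}(x)=\tfrac{1}{2}\Psi_{m}(x)g_{j}^{(m)}(x)\textnormal{ rem }(1+x^m)$ (equivalently $g_{j}^{(m)}(x)=(\Psi_{m}(x)/2)^{j}\textnormal{ rem }(1+x^m)$), or insert the explicit denominators $2^{j}$ and $2^{k-1}$ into (\ref{pf_k}). With that correction your induction goes through verbatim; without it, no bookkeeping can rescue the identity as stated.
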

\begin{proof}[Proof.]
We prove by induction on $k$. The equation (\ref{pf_case_k_1}) is the base case $k=1$. Suppose the result holds for $k$.  Multiplying both sides of (\ref{pf_k}) by $\frac{1}{1-x}$we get 
$$
\frac{1}{(1-x)^{k+1}(1+x^m)} =\frac{\sum_{j=0}^{k-1}g_{j}^{(m)}(1)(1-x)^{j}}{2(1-x)^{k+1}}+\frac{g_{k}^{(m)}(x)}{(1-x)(1+x^m)}.
$$
By performing partial fractions on the second term we have 
\begin{eqnarray}
\nonumber \frac{1}{(1-x)^{k+1}(1+x^m)} &=& \frac{\sum_{j=0}^{k-1}g_{j}^{(m)}(1)(1-x)^{j}}{2(1-x)^{k+1}}+\frac{g_{k}^{(m)}(1)}{2(1-x)}+\frac{g_{k+1}^{(m)}(x)}{1+x^m}\\
\nonumber &=& \frac{\sum_{j=0}^{k}g_{j}^{(m)}(1)(1-x)^{j}}{2(1-x)^{k+1}}+\frac{g_{k+1}^{(m)}(x)}{1+x^m},
\end{eqnarray}
where we used the fact $g_{k+1}(x)=\textnormal{eval}(g_{k}(x);1+x^m)$. Hence the result is proved. 
\end{proof}

%\begin{lem}
%$$
%\textnormal{eval}\left(\frac{1}{\Phi_{n}(x)};\Phi_{2n}(x)\r%ight)= \frac{1}{2}(1-x) \textnormal{ for } 2\nmid n. 
%$$
%\end{lem}

%\begin{lem}
%$$
%\textnormal{eval}\left(\frac{1}{\Phi_{2n}(x)};\Phi_{n}(x)\r%ight)= \frac{1}{2}(1+x) \textnormal{ for } 2\nmid n. 
%$$
%\end{lem}

%\begin{lem}
%Let $p$ be a prime and $n$ be a positive integer. Then, 
%$$
%\textnormal{eval}\left(\frac{1}{\Phi_{n}(x)};\Phi_{p}(x)\ri%ght)= \frac{1}{\Phi_{np}(x)} \textnormal{ rem } %\Phi_{p}(x), p\nmid n
%$$
%and $\textnormal{eval}(1/\Phi_{n}(x);\Phi_{p}(x))=1/\Phi_{n%/p}(1)$ when $p\mid n$.
%\end{lem}

\subsection{Fourier Series and $\textnormal{eval}(\cdot, \Phi_{m}(x)^r)$}\label{subsec_dft}

The power of eval with respect to cyclotomic polynomials emerges in the context of finite Fourier series. In this regard, the following substitution plays a vital role. 

\begin{lem}[Substitution Rule 2]\label{sub_2}
Let $m,k$ be positive integers and $f(x),g(x)\in \mathbb{Q}[x]$ be two polynomials such that $\textnormal{gcd}(g(x),\Phi_{m}(x))=1.$ Then, 
\begin{equation}\label{sub_2}
\left.\res{\frac{f(x)}{g(x)}}{\Phi_{m}(x)^{r}}\right|_{x=\xi}=\frac{f(\xi)}{g(\xi)}\quad \textnormal{ for }\xi\in\Delta_{m}.
\end{equation}
\end{lem}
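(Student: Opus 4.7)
The plan is to simply unpack the definition of the \textnormal{eval} operator and use the fact that $\Phi_m(\xi) = 0$ for $\xi \in \Delta_m$. The identity is essentially a restatement of the defining congruence of $\alpha(x)$ after specialization.

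First I would verify that the eval on the left-hand side is well-defined. Since $\Phi_m(x)$ is irreducible in $\mathbb{Q}[x]$ and $\gcd(g(x),\Phi_m(x)) = 1$, the polynomial $\Phi_m(x)$ does not divide $g(x)$; as $\Phi_m(x)^k$ has $\Phi_m(x)$ as its only irreducible factor, we obtain $\gcd(g(x),\Phi_m(x)^k) = 1$. By B\'ezout, there is a unique $\alpha(x) \in \mathbb{Q}[x]$ with $\deg\alpha < k\,\phi(m)$ satisfying
\begin{equation*}
\alpha(x)g(x) \equiv 1 \pmod{\Phi_m(x)^k}.
\end{equation*}

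Next I would apply Definition \ref{eval} to write
\begin{equation*}
R(x) := \res{\frac{f(x)}{g(x)}}{\Phi_m(x)^k} = \rem{\alpha(x)f(x)}{\Phi_m(x)^k},
\end{equation*}
so there exist polynomials $\beta(x), q(x) \in \mathbb{Q}[x]$ with
\begin{equation*}
\alpha(x)g(x) = 1 + \Phi_m(x)^k \beta(x), \qquad \alpha(x)f(x) = \Phi_m(x)^k q(x) + R(x).
\end{equation*}

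Finally I would substitute $x = \xi$ for $\xi \in \Delta_m$. Since $\Phi_m(\xi) = 0$, the first identity collapses to $\alpha(\xi)g(\xi) = 1$, and note $g(\xi) \neq 0$ (as $\gcd(g,\Phi_m)=1$ prevents $g$ from vanishing at a primitive $m$th root of unity), so $\alpha(\xi) = 1/g(\xi)$. The second identity collapses to $\alpha(\xi)f(\xi) = R(\xi)$, hence
\begin{equation*}
R(\xi) = \frac{f(\xi)}{g(\xi)},
\end{equation*}
which is the claim. There is really no obstacle here; the whole proof is just tracing the definition and using $\Phi_m(\xi)^k = 0$. The only point requiring slight care is justifying that $\alpha$ exists modulo $\Phi_m^k$ (not merely modulo $\Phi_m$), which follows from the irreducibility of $\Phi_m$, and that $g(\xi) \neq 0$ so the right-hand side is actually well-defined.
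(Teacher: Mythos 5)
Your proof is correct and follows essentially the same route as the paper's: write the B\'ezout identity $\alpha(x)g(x)+\beta(x)\Phi_m(x)^k=1$, evaluate at $\xi\in\Delta_m$ to get $\alpha(\xi)=1/g(\xi)$, and then evaluate the division identity $\alpha(x)f(x)=q(x)\Phi_m(x)^k+R(x)$ at $\xi$. Your version is slightly more careful than the paper's in that you explicitly justify the passage from $\gcd(g,\Phi_m)=1$ to $\gcd(g,\Phi_m^k)=1$ via irreducibility and note that $g(\xi)\neq 0$, but the substance is identical.
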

\begin{proof}[Proof.]
Since $\textnormal{gcd}(g(x),\Phi_{m}(x)^{r})=1$ we have
$
\alpha(x)g(x)+\beta(x)\Phi_{m}(x)^{r}=1.
$
Setting $x=\xi$ in the above equation we get $\alpha(\xi)=1/g(\xi)$. Therefore,  
$$
\left.\res{\frac{f(x)}{g(x)}}{\Phi_{m}(x)^r}\right\vert_{x=\xi}= f(x)\alpha(x) \textnormal{ rem } \Phi_{m}(x)^r\vert_{x=\xi}=f(\xi)\alpha(\xi)-q(\xi)\Phi_{m}(\xi)^r
$$
for some $q(x)\in \mathbb{Q}[x]$.  
\end{proof}

For the purpose of determining the coefficients of the $q$-partial fractions we need to determine the Fourier series of 
$$
H(x)=\frac{h(x)}{\Phi_{m}(x)^{r}}=\frac{\Theta_{m}(x)^{r}h(x)}{(1-x^{m})^{r}}\quad \textnormal{ for } h(x)=\textnormal{eval}\left(\frac{f(x)}{g(x)}; \Phi_{m}(x)^{r}\right),
$$
$f(x),g(x)\in \mathbb{Q}[x]$ and $g(x)$ relatively prime to $\Phi_{m}(x)$.

It is well known that when $r=1$ the Fourier series of $H(x)$ is a finite Fourier series. So, we first determine the finite Fourier series for the case $r=1$.  
$$
H(x)=\frac{\Theta_{m}(x)h(x)}{1-x^{m}},\quad \textnormal{ for }\quad h(x)=\textnormal{eval}\left(\frac{f(x)}{g(x)};\Phi_{m}(x)\right).
$$ 
Let the sequence $(a(n))$ be a periodic function on $\mathbb{Z}$ with period $m$ and let $H(x)$ be a generating function for $(a(n))$. Then, the finite Fourier series expansion is given by 
$$
a(n)=\frac{1}{m}\sum_{\stackrel{\xi^{m}=1}{\xi\neq 1}}\Theta_{m}(\xi)h(\xi)\xi^{-n}=\frac{1}{m}\sum_{\xi\in \Delta_{m}}\Theta_{m}(\xi)h(\xi)\xi^{-n},
$$
as $\Theta_{m}(\xi)=0$ if $\Phi_{m}(\xi)\neq 0$ the sum in the above equation runs over $\Delta_{m}$. By Substitution Rule 2 (Lemma \ref{sub_2}) and the finite Fourier series expansion we deduce
\begin{eqnarray}
\nonumber H(x)&=&\frac{\Theta_{m}(x)h(x)}{1-x^{m}} =\sum^{\infty}_{n=0}a(n)x^{n}\\
\nonumber &=& \sum^{\infty}_{n=0}\left(\frac{1}{m}\sum_{\xi\in \Delta_{m}}\Theta_{m}(\xi)h(\xi)\xi^{-n}\right)x^{n} = \sum^{\infty}_{n=0}\left(\frac{1}{m}\sum_{\xi\in \Delta_{m}}\frac{\Theta_{m}(\xi)f(\xi)}{g(\xi)}\xi^{-n}\right)x^{n} .
\end{eqnarray}

Using the above reasoning we could prove that the degenerate Euler number (Equation (\ref{deg_euler}), which is obtained algebraically in Lemma \ref{pow_k}) can be expressed as a trigonometric sum. 

\begin{lem}\label{trig_deg_euler}
The polynomial $g^{(m)}_{k}(x)$ defined in Lemma \ref{pow_k} with odd $m$ satisfies the following equation for $\xi$ with $\xi^{m}+1=0$ and $\xi\neq -1$: 
\begin{equation}\label{euler_trig}
g^{(m)}_{k}(1)=\frac{2}{m}\sum_{j=0}^{m-1}\frac{\xi^{j}}{(1+\xi^{j})^{k+1}}.
\end{equation}    
\end{lem}
\begin{proof}
In order to obtain a finite Fourier series of (\ref{pf_k}), for $m$ odd we evaluate at a root of unity $\xi\neq 1$, satisfying the equation $1+\xi^{m}=0$, the function
$
g^{(m)}_{k}(x)=\textnormal{eval}(\Psi_{m}(x)^{k};1+x^{m})
$ 
we have $g^{(m)}_{k}(x)=\Psi_{m}(-\xi)^{k}$. By considering finite Fourier series of the second term of the right-hand side in (\ref{pf_k}) and the zeroth coefficient we have 
\begin{equation}\label{k_1_eqn}
1=\frac{1}{2}\sum_{j=0}^{k-1}\frac{g^{(m)}_{j}(1)}{(1-x)^{k-1-j}}+\frac{1}{m2^{k}}\sum_{j=0}^{m-1}\Psi_{m}(-\xi^{j}).
\end{equation}
By considering the $k+1$ case in (\ref{k_1_eqn}), subtracting both the equations and simplifying we obtain the required result. 
\end{proof}

Now we set ourselves for the case $r>1$. We consider
$$
H(x)=\frac{h(x)}{\Phi_{m}(x)^{r}}=\frac{\Theta_{m}(x)^{r}h(x)}{(1-x^{m})^{r}}=\frac{\tilde{h}(x)}{(1-x^{m})^{r}},
$$
where $r>1$ and $\textnormal{deg}(h)<r\phi(m)$. In this case, the associated Fourier series is no more a finite Fourier series. But one could use the tools of finite Fourier series by a simplification of the form \begin{equation}\label{decompose_h}
\tilde{h}(x)=\sum^{r-1}_{j=0}\tilde{h}_{j}(x)(1-x^{m})^{j}, \textnormal{ where } \textnormal{deg}(\tilde{h}_{j})<m,
\end{equation}
and thus we can express 
$$
H(x)=\frac{\tilde{h}(x)}{(1-x^{m})^{r}}=\sum_{j=0}^{r-1}\frac{1}{(1-x^{m})^{r-j-1}}\cdot\frac{\tilde{h}_{j}(x)}{(1-x^{m})}.
$$
Thus, in order to obtain the Fourier series for $H(x)$ we first obtain the finite Fourier series for each $\tilde{h}_{j}(x)/(1-x^{m})$ and then multiply with the series of $1/(1-x^{m})^{r-j-1}.$ An explicit expression of the form (\ref{decompose_h}) can be obtained by the following result.

\begin{thm}[\cite{uk}]\label{taylor_Dm}
Suppose $\tilde{h}(x)$ is a polynomial with $\textnormal{deg}(\tilde{h})< rm$. Then, 
$$
\tilde{h}(x)=\sum^{r-1}_{j=0}\frac{(-1)^{j}\tilde{h}^{(j)}(x)}{j!}(1-x^{m})^{j},
$$
where $\tilde{h}^{(j)}(x)=\textnormal{eval}\left(D_{m}^{j}\tilde{h}(x); 1-x^{m}\right)$ and $D_{m}(x^{k})=\left\lfloor\frac{k}{m}\right\rfloor x^{k-m}.$  In particular, $\tilde{h}^{(0)}(x)$ is the remainder of $\tilde{h}(x)$ when divided by $1-x^{m}$.
\end{thm}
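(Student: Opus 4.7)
The plan is to reduce to the monomial case by linearity, then verify the identity on a single monomial via the binomial theorem. Both sides of the claimed equality are linear in $\tilde{h}(x)$ (the operator $D_m^j$ is manifestly linear, and by Lemma \ref{lem_res}(2) the eval operator is additive), so it suffices to establish the formula when $\tilde{h}(x) = x^k$ for $0 \le k < rm$.

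First I would write $k = qm + i$ uniquely with $0 \le i < m$ and $0 \le q < r$. A direct unrolling from the definition $D_m(x^s) = \lfloor s/m \rfloor x^{s-m}$ gives $D_m^j(x^{qm+i}) = \frac{q!}{(q-j)!} x^{(q-j)m + i}$ for $j \le q$, and $0$ for $j > q$. Since $(q-j)m + i \equiv i \pmod m$, Lemma \ref{lem_rem_xm} yields
\[
\tilde{h}^{(j)}(x) = \res{D_m^j(x^k)}{1-x^m} = \frac{q!}{(q-j)!}\, x^i \qquad (0 \le j \le q),
\]
and $\tilde{h}^{(j)}(x) = 0$ for $j > q$.

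Substituting into the proposed right-hand side, the sum truncates at $j = q \le r-1$:
\[
\sum_{j=0}^{r-1} \frac{(-1)^j \tilde{h}^{(j)}(x)}{j!}(1-x^m)^j
= x^i \sum_{j=0}^{q} \binom{q}{j}\bigl(-(1-x^m)\bigr)^j
= x^i \bigl(1-(1-x^m)\bigr)^q
= x^{qm+i} = x^k,
\]
by the binomial theorem, which matches the left-hand side. Extending linearly over all monomials with exponent less than $rm$ completes the argument.

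The only subtlety I foresee is bookkeeping the two boundary cases: that $D_m^j$ annihilates monomials with $q < j$ (so the sum really truncates at $\min(q, r-1) = q$), and that the degree hypothesis $\deg(\tilde{h}) < rm$ guarantees every monomial appearing in $\tilde h$ has $q < r$, which is exactly what makes the binomial sum telescope to $x^k$ rather than being cut off prematurely. Apart from these verifications the computation is routine.
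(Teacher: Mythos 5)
Your proof is correct and complete: the reduction to monomials by linearity is legitimate (both $D_m^j$ and the eval/remainder operator are linear), the computation $D_m^j(x^{qm+i})=\frac{q!}{(q-j)!}x^{(q-j)m+i}$ for $j\le q$ and $0$ otherwise is right, and the binomial collapse $x^i\bigl(1-(1-x^m)\bigr)^q=x^k$ settles the identity; the hypothesis $\deg(\tilde h)<rm$ is used exactly where you say, to ensure $q\le r-1$ so the sum is not truncated. Note that the paper itself states this theorem as imported from \cite{uk} without reproducing a proof, so there is nothing internal to compare against; your monomial-plus-binomial argument is the natural self-contained verification and is consistent with how the surrounding machinery (Lemma \ref{lem_res}, Lemma \ref{lem_rem_xm}) is used elsewhere in the paper.
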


Computation of $\tilde{h}^{(j)}(x)$ above can be done efficiently by Lemma \ref{lem_rem_xm}. Furthermore, from Lemma \ref{lem_rem_xm} it is also easy to see that substitution of $\xi$ an $m^{th}$ root of unity in $\tilde{h}^{(j)}(x)$ gives us 
$$
\tilde{h}^{(j)}(\xi)=\left.\textnormal{eval}\left(D_{m}^{j}\tilde{h}(x); 1-x^{m}\right)\right\rvert_{x=\xi}=D^{j}_{m}\tilde{h}(\xi).
$$
In particular, by Substitution Rule Lemma \ref{sub_2} we have  $\tilde{h}^{(0)}(\xi)=\tilde{h}(\xi)$. Reminding oneself that we have $\tilde{h}(\xi)=\Theta_{m}(\xi)h(\xi)=0$ if $\xi$ is $m^{th}$ root of unity but not in $\Delta_{m}$. Therefore, we have 
$$
\tilde{h}^{(0)}(\xi)=\begin{cases}
\Theta_{m}(\xi)h(\xi)\quad \textnormal{ if }\xi \in \Delta_{m} \\
0\quad\quad\quad\quad\quad\ \  \textnormal{if }\xi^{m}=1 \textnormal{ but } \xi \notin \Delta_{m}.
\end{cases}
$$
The Fourier series for the $j^{th}$ term corresponding to $\tilde{h}^{(j)}(x)$ can be written as 
\begin{equation}\label{gen_hij}
\frac{(-1)^{j}}{j!}\frac{1}{(1-x^{m})^{r-j-1}}\frac{\tilde{h}^{(j)}(x)}{(1-x^{m})} =\sum^{\infty}_{n=0}\sum_{n'=0}^{n}a_{1}(n')a_{2}(n-n')x^{n},
\end{equation}
where $a_{1}(n)=\frac{(-1)^{j}}{j!}\begin{pmatrix}\lfloor \frac{n}{m}\rfloor-r-j-2 \\ r+j+2\end{pmatrix}$ and $a_{2}(n)=\frac{1}{m}\sum_{\stackrel{\xi^{m}=1}{\xi\neq 1}}\tilde{h}_{j}(\xi)\xi^{-n}$. In particular, for $j=0$ we have 
\begin{equation}\label{zero_term}
\frac{\tilde{h}^{(0)}(x)}{(1-x^{m})^{r}}=\sum_{n=0}^{\infty}\sum_{n'=0}^{n} \begin{pmatrix} \lfloor \frac{n'}{m}\rfloor-r-2 \\ r+2\end{pmatrix}\left(\frac{1}{m}\sum_{\xi\in \Delta_{m}}\Theta_{m}(\xi)h(\xi)\xi^{-(n-n')}\right)x^{n}.
\end{equation}
Finally, the Fourier series for $h(x)/(1-x^{m})^{r}$ can be obtained by adding all the terms for $j=0,\cdots,(r-1)$ from (\ref{gen_hij}).

\section{The $q$-Partial Fractions of $F_{N}(x)$}

We first obtain a $q$-partial fraction of the generating function 
\begin{equation}\label{gNk}
F_{N}(x)=\prod_{k=1}^{N}\frac{1}{1-x^{k}}=\sum_{k=1}^{N}\frac{g_{k}^{(N)}(x)}{(1-x^{k})^{\lfloor N/k \rfloor}}.
\end{equation}
After a factorization of $F_{N}(x)$ into cyclotomic polynomials we have  
$$
\frac{1}{\prod_{k=1}^{N}(1-x^{k})}
=\frac{1}{\prod_{k=1}^{N}\Phi_{k}(x)^{\lfloor N/k \rfloor}}.
$$
Factors $\Phi_{k}(x)^{\lfloor N/k\rfloor}$ with distinct $k$ are pairwise relatively prime so we can apply the extended cover-up method to obtain a partial fraction to obtain 
\begin{equation}\label{pf_phi}
\prod_{k=1}^{N}\frac{1}{\Phi_{k}(x)^{\lfloor N/k \rfloor}}=\sum_{k=1}^{N}\frac{h_{k}^{(N)}(x)}{\Phi_{k}(x)^{\lfloor N/k \rfloor}},
\end{equation}
where 
$$
h_{k}^{(N)}(x)=\textnormal{eval}\left(\frac{1}{\Phi_{1}(x)^{N}\cdots \widehat{\Phi_{k}(x)^{\lfloor N/k\rfloor}} \cdots \Phi_{N}(x)};\Phi_{k}(x)^{\lfloor N/k\rfloor}\right).
$$
Upon multiplying and dividing by $\Theta_k(x)^{\lfloor N/k\rfloor}$ for each factor in (\ref{pf_phi}) and using $1-x^k=\Phi_{k}(x)\Theta_{k}(x)$ we obtain the required partial fraction (\ref{gNk}) with $g_{k}^{(N)}(x)=\Theta_k(x)^{\lfloor N/k\rfloor}h_{k}^{(N)}(x)$. One can simplify this term easily by using Corollary \ref{eval_power_cyclo}  
$$
g_{k}^{(N)}(x)=\left(\Theta_{k}(x)^{\lfloor N/k \rfloor}\sum_{\stackrel{j=0}{j_{1}+\cdots+j_{N}=j}}^{\lfloor N/k\rfloor-1}j!\frac{\tilde{a}_{j_{1}}(x)}{j_{1}!}\cdots \frac{\tilde{a}_{j_{N}}(x)}{j_{N}!}\Phi_{k}(x)^{j}\right)\textnormal{ rem } \Phi_{k}(x)^{\lfloor N/k\rfloor},
$$
where the index is always $j_{k}=0$, $\tilde{a}_{j_{k}}(x)=1$ and $\tilde{a}_{j}(x)$. Further, as $\textnormal{deg}(\tilde{a}_{j})<\phi(k)$ we can make the procedure to obtain remainder efficient.   

Now using Theorem \ref{taylor_Dm} we obtain the terms 
$$
g_{k(\lfloor N/k\rfloor-l)}^{(N)}(x)=\textnormal{eval}\left(\frac{(-1)^{l}}{l!}D_{k}^{(l)}\left(g_{k}^{(N)}(x)\right); 1-x^{k}\right), 
$$
where notice that the index $\lfloor N/k\rfloor-l$ refers to the $(l+1)^{th}$ term of the series. In particular, the first term is 
\begin{eqnarray}
\label{gNkl} g_{k\lfloor N/k\rfloor}^{(N)}(x)&=&\textnormal{eval}\left(g_{k}^{(N)}(x); 1-x^{k}\right) \\
\nonumber &=& \textnormal{eval}\left(\Theta_{k}(x)^{\lfloor N/k\rfloor}h_{k}^{(N)}(x);1-x^{k}\right).
\end{eqnarray}

\begin{thm}[$q$-Partial Fraction]\label{main_qpf}
Given $N>0$ we have the $q$-partial fraction decomposition of $F_{N}(x)$ as 
$$
F_{N}(x)=\prod_{k=1}^{N}\frac{1}{1-x^{k}}=\sum_{k=1}^{N}\sum_{l=1}^{\lfloor N/k\rfloor}\frac{g_{kl}^{(N)}(x)}{(1-x^{k})^{l}},
$$
where the term $g_{kl}^{(N)}(x)$ is in (\ref{gNkl}).
\end{thm}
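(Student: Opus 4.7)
The plan is to assemble the decomposition in two layers: first collapse the cyclotomic-factor partial fractions into a single-sum expansion in powers of $(1-x^k)^{\lfloor N/k\rfloor}$, and then refine that expansion into the promised double sum by invoking the Taylor-type identity of Theorem \ref{taylor_Dm}.

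\textbf{Step 1 (single-sum form).} I would start from the cyclotomic factorization $F_N(x) = \prod_{k=1}^N \Phi_k(x)^{-\lfloor N/k\rfloor}$ and note that the factors $\Phi_k(x)^{\lfloor N/k\rfloor}$ are pairwise relatively prime. Applying the extended cover-up method (Theorem \ref{cover_up}) yields
$$
\prod_{k=1}^{N}\frac{1}{\Phi_{k}(x)^{\lfloor N/k\rfloor}} = \sum_{k=1}^{N}\frac{h_{k}^{(N)}(x)}{\Phi_{k}(x)^{\lfloor N/k\rfloor}}
$$
with $h_k^{(N)}(x)$ as in (\ref{hNk}). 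Multiplying the $k$th summand by $\Theta_{k}(x)^{\lfloor N/k\rfloor}/\Theta_{k}(x)^{\lfloor N/k\rfloor}$ and using $\Phi_k(x)\Theta_k(x) = 1-x^k$ reindexes the decomposition as $\sum_{k=1}^{N} g_k^{(N)}(x)/(1-x^k)^{\lfloor N/k\rfloor}$, where $g_k^{(N)}(x) := \Theta_k(x)^{\lfloor N/k\rfloor} h_k^{(N)}(x)$.

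\textbf{Step 2 (degree check and refinement).} Before applying Theorem \ref{taylor_Dm} I must verify the degree bound $\deg g_k^{(N)} < k\lfloor N/k\rfloor$. Since the extended cover-up output satisfies $\deg h_k^{(N)} < \phi(k)\lfloor N/k\rfloor$ and $\deg \Theta_k = k-\phi(k)$, multiplication gives $\deg g_k^{(N)} < k\lfloor N/k\rfloor$, exactly the hypothesis required. Theorem \ref{taylor_Dm} (with $m=k$, $r=\lfloor N/k\rfloor$) then supplies the expansion
$$
g_k^{(N)}(x) = \sum_{j=0}^{\lfloor N/k\rfloor-1} \frac{(-1)^{j}(g_k^{(N)})^{(j)}(x)}{j!}\,(1-x^k)^{j},
$$
where $(g_k^{(N)})^{(j)}(x) = \textnormal{eval}\bigl(D_k^{j} g_k^{(N)}(x);\,1-x^k\bigr)$.

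\textbf{Step 3 (reindexing).} Dividing the expansion of $g_k^{(N)}(x)$ by $(1-x^k)^{\lfloor N/k\rfloor}$ and substituting $l = \lfloor N/k\rfloor - j$ recasts the $k$th summand of the single-sum form as $\sum_{l=1}^{\lfloor N/k\rfloor} g_{kl}^{(N)}(x)/(1-x^k)^{l}$, with the coefficients $g_{kl}^{(N)}(x)$ matching the formula displayed just before (\ref{gNkl}) (and in particular the top-order term $g_{k\lfloor N/k\rfloor}^{(N)}(x)$ coinciding with (\ref{gNkl})). Summing over $k$ gives the stated double-sum $q$-partial fraction.

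\textbf{Main obstacle.} There is no deep analytic obstacle; the content of the theorem is essentially the composition of Theorem \ref{cover_up} with Theorem \ref{taylor_Dm}. The only real care is in Step 2, namely keeping track of the degree bounds so that the Taylor-type identity applies cleanly and the reindexed sum terminates at exactly $l = \lfloor N/k\rfloor$, with no leftover polynomial part and no need for an extra $l = 0$ remainder term.
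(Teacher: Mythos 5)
Your proposal matches the paper's own argument essentially step for step: extended cover-up on the cyclotomic factorization, conversion to $(1-x^k)$-denominators via the inverse cyclotomic polynomials $\Theta_k(x)$, and refinement of each $g_k^{(N)}(x)/(1-x^k)^{\lfloor N/k\rfloor}$ into the sum over $l$ by the Taylor-type expansion of Theorem \ref{taylor_Dm} followed by the reindexing $l=\lfloor N/k\rfloor-j$. The explicit degree check $\deg g_k^{(N)} < k\lfloor N/k\rfloor$ in your Step 2 is a detail the paper leaves implicit, and it is verified correctly.
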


\section{The Waves $W_{1}(n;N)$ and $W_{2}(n;N)$}\label{sec_W1W2} 

In this section we provide direct and explicit formulas for the second wave. In \cite{uk} the current author has demonstrated that the first wave $W_{1}(n;N)$ can be expressed in terms of the so called degenerate Bernoulli numbers. Here we show that the second wave $W_{2}(n;N)$ can be expressed in terms of degenerate Euler and degenerate Bernoulli numbers. The crucial point of our derivation is that the first and second cyclotomic polynomials are linear, i.e., $\Phi_{1}(x)=1-x$ and $\Phi_{2}(x)=1+x$.

Degenerate Bernoulli and degenerate Euler numbers have been defined by Carlitz\cite{Carlitz}. In \cite{uk} the current author demonstrated that the polynomial 
$$
\Psi_{m}(x)=\frac{1-x^m}{1-x}=1+x+\cdots+x^{m-1}
$$ 
can be used to express the degenerate Bernoulli numbers. In fact, it was shown that a Taylor series expression holds, that is
\begin{equation}\label{young_eqn}
\frac{m(1-x)}{1-x^{m}}=\frac{m}{\Psi_{m}(x)}=\sum_{k=0}^{\infty}(-1)^{k}\frac{\tilde{\beta}_{k}(m)}{k!}(1-x)^{k},
\end{equation}
where $\tilde{\beta}_{k}(m)=m^{k}\beta_{k}(1/m)$ where $\beta_{k}(m)$ is the degenerate Bernoulli number. Furthermore, if $m$ is odd and replacing $x$ by $-x$ we have 
\begin{equation}\label{bernoulli_even}
\frac{m(1+x)}{1-x^m}=\sum_{k=0}^{\infty}\frac{(-1)^{k}\tilde{\beta}_{k}(m)}{k!}(1+x)^k.
\end{equation}
Recasting the result in \cite{uk} we have the result. 
\begin{thm}[\cite{uk}]
\begin{equation}\label{W1} 
\Gamma_{01(N-j)}(N)=\frac{(-1)^{j}}{N!}\sum_{j_{2}+\cdots+j_{N}=j}\frac{\tilde{\beta}_{j_{2}}(2)\cdots \tilde{\beta}_{j_{N}}(N)}{j_{2}!\cdots j_{N}!},
\end{equation}
for $0\leq j \leq N-1$.  
\end{thm}
Using (\ref{W1}) one can easily derive the following recurrence relation
$$
\Gamma_{01(N-j)}(N+1)=\frac{1}{N+1}\sum_{k=0}^{j+1}\frac{(-1)^{k}}{k!}\tilde{\beta}_{k}(N+1)\Gamma_{01(N+k-j-1)}(N).
$$

Now, we derive a formula for the coefficients associated with the second wave. Both degenerate Bernoulli and the degenerate Euler numbers appear in coefficients associated with the second wave. Degenerate Euler number was defined by Carlitz (see \cite{kim} and the references therein) as
\begin{equation}\label{deg_euler}
\frac{2}{(1+\lambda t)^{1/\lambda}+1}=\sum_{k=0}^{\infty} \varepsilon_{k}(\lambda)\frac{t^{n}}{n!}.
\end{equation}
We consider $\lambda =1/m$, for some positive integer $m$, and substitute $1+\lambda t=x$ in (\ref{deg_euler}) to get 
$$
\frac{2}{1+x^m}=\sum_{k=0}^{\infty}\frac{(-1)^{k}\tilde{\varepsilon}_{k}(m)}{k!}(1-x)^k, \textnormal{ where } \tilde{\varepsilon}_{k}(m)=m^{k}\varepsilon_{k}(m),
$$
For $m=2n+1$, we can replace $x$ with $-x$ to obtain 
\begin{equation}\label{euler_odd}
\frac{2}{1-x^{2n+1}}=\sum_{k=0}^{\infty}\frac{(-1)^{k}\tilde{\varepsilon}_{k}(2n+1)}{k!}(1+x)^k.
\end{equation}

\begin{lem}
The degenerate Euler number $\tilde{\varepsilon}_{k}(m)$ is a polynomial in $m$.  
\end{lem}
\begin{proof}
By Lemma \ref{pow_k} we have 
$$
g_{k}(1)=(-1)^{k}\frac{\tilde{\varepsilon}_{k}(m)}{k!},
$$
where $g_{k}(x)=\left(\frac{1}{2}\Psi_{m}(x)\right)^{k} \textnormal{ rem }(x^{m}+1)$ and $\Psi_{m}(x)=1+x+\cdots+x^{m-1}$. One can easily see the result observing that $g_{k}(x)$ is a polynomial of degree $m-1$ with coefficients being polynomials in $k$ and $m$.
\end{proof}

Equipped with (\ref{bernoulli_even}) and (\ref{euler_odd}) we can obtain the necessary formula for the coefficients associated with the second wave. For the purpose of obtaining all the coefficients of the second wave we rewrite the $q$-partial fraction result as follows 

$$
\prod_{k=1}^{N}\frac{1}{1-x^{k}}=\sum_{l=1}^{\lfloor N/2\rfloor}\frac{\Gamma_{02l}(N)}{(1+x)^{l}}+\sum_{\stackrel{k=1}{k\neq 2}}^{N}\sum_{l=1}^{\lfloor N/k\rfloor}\sum_{j=0}^{k-1}\frac{\Gamma_{jkl}(N)x^{j}}{(1-x^{k})^{l}}.
$$
Once we obtain $\Gamma_{02l}(N)$ one can obtain the second wave easily using the expansion
$$
\frac{1}{(1+x)^{l}}=\sum_{n=0}^{\infty}(-1)^{n}\begin{pmatrix}t+l-1\\l-1\end{pmatrix}x^{n}.
$$
In fact, upon expansion we have
$$
W_{2}(n;N)=(-1)^{n}\sum_{l=1}^{\lfloor N/2\rfloor}\sum_{i=0}^{l-1}\begin{pmatrix}
n+l-i-1 \\ l-i-1
\end{pmatrix} \Gamma_{02l}.
$$
Therefore, we aim to obtain direct formulae for $\Gamma_{02l}(N)$. Towards this direction we have to compute 
$$
\textnormal{eval}\left(\frac{(1+x)^{\lfloor N/2\rfloor}}{(1-x)(1-x^2)\cdots (1-x^{N})}; (1+x)^{\lfloor N/2\rfloor}\right)
$$
First consider the case $N=2M$. The argument in the above expression can be written as 
$$
\frac{(1+x)^{M}}{(1-x)(1-x^2)\cdots (1-x^{2M})}=\frac{1}{1-x}\cdot \frac{1+x}{1-x^2}\ \cdots\  \frac{1}{1-x^{2M-1}}\cdot\frac{1+x}{1-x^{2M}}.
$$
Thus by determining the Taylor series expansion of the above function about $x=-1$, and using Corollary \ref{sum_of_products} along with (\ref{bernoulli_even}) and (\ref{euler_odd}) we obtain  
\begin{eqnarray}
\nonumber g(x)&=&\textnormal{eval}\left(\frac{(1+x)^{M}}{(1-x)(1-x^2)\cdots (1-x^{2M})};(1+x)^{M}\right) \\ 
\nonumber &=&  \textnormal{eval}\left(\prod_{j=1}^{M}\frac{1}{1-x^{2j-1}}\prod_{j=1}^{M}\frac{1+x}{1-x^{2j}};(1+x)^{M}\right) \\
\nonumber &=& \left(\sum^{N}_{j=0}\sum_{j_{1}+\cdots+j_{N}=j}\prod_{s=1}^{M} \frac{(-1)^{j_{2s-1}}\tilde{\varepsilon}_{j_{2s-1}}(2s-1)}{2\cdot j_{2s-1}!}\frac{(-1)^{j_{2s}}\tilde{\beta}_{j_{2s}}(2s)}{2s\cdot j_{2s}!} (1+x)^{j}\right) \textnormal{ rem }(1+x)^{M}\\
\nonumber &=& \frac{1}{2^{N}\lfloor  N/2\rfloor!}\sum_{j=0}^{M-1}(-1)^{j}\sum_{j_{1}+\cdots+j_{N}=j}^{M}\frac{\tilde{\varepsilon}_{j_{1}}(1)}{2\cdot j_{1}!}\cdots \frac{\tilde{\varepsilon}_{j_{2M-1}}(2M-1)}{2\cdot j_{2M-1}!}\cdot \frac{\tilde{\beta}_{j_{2}}(2)}{2\cdot j_{2}!}\cdots \frac{\tilde{\beta}_{j_{2M}}(2M)}{2s\cdot j_{2M}!}(1+x)^{j}.
\end{eqnarray}

\begin{thm}
For $1\leq j\leq \lfloor N/2\rfloor$
\begin{equation}\label{W2}
\Gamma_{02(\lfloor N/2\rfloor-j)}(N)=\frac{(-1)^{j}}{2^{N}\lfloor N/2\rfloor!}\sum_{j_{1}+\cdots+j_{N}=j}\frac{\tilde{\gamma}_{j_{1}}(1)\cdots\gamma_{j_{N}}(N)}{j_{1}!\cdots j_{N}!},
\end{equation}
where $\gamma_{j}(n)$ is defined as 
$$  
\gamma_{k}(n)=\begin{cases}
\tilde{\varepsilon}_{k}(n) & \textnormal{ for } n \textnormal{ odd},\\
\tilde{\beta}_{k}(n) & \textnormal{ for } n \textnormal{ even}.
\end{cases}
$$
Using (\ref{W2}) we have the following recurrence relation 
$$
\Gamma_{02(\lfloor N/2\rfloor-j)}(N+1)=\sum_{k=0}^{j}\frac{\gamma_{k}(N+1)}{k!}\tilde{\Gamma}_{02(\lfloor N/2\rfloor-j-k)}(N).
$$
\end{thm}

With the help of the formulas for the coefficients (\ref{W1}) and (\ref{W2}) we can obtain the first few terms of the $W_{1}(t;N)$ and $W_{2}(t;N)$. Towards this direction, by a direct calculation of the Taylor series we have the first three degenerate Bernoulli numbers
$$
\tilde{\beta}_{0}(m)=1,\ \tilde{\beta}_{1}(m)=\frac{m-1}{2}\ \textnormal{ and } \tilde{\beta}_{2}(m)=\frac{m^2-1}{12},
$$
and for $m$ odd the first three degenerate Euler numbers can be computed as
$$
\tilde{\varepsilon}_{0}(m)=1/2,\  \tilde{\varepsilon}_{1}(m)=m/4\ \textnormal{ and } \tilde{\varepsilon}_{2}(m)=m/8.
$$
Therefore, we have the first and second waves given below. 
\begin{eqnarray}
\nonumber W_{1}(t;N)&=&\frac{1}{N!}\begin{pmatrix}t+N-1 \\ N-1\end{pmatrix}+\frac{1}{2(N-2)!}\begin{pmatrix}t+N-2 \\ N-2\end{pmatrix}\\
\nonumber & &\quad\quad\quad\quad -\frac{(9N^2-11N-5)}{144(N-2)!}\begin{pmatrix}t+N-3 \\ N-3\end{pmatrix}+O(t^{N-4})
\end{eqnarray}
and
\begin{eqnarray}
\nonumber W_{2}(t;N)&=&\frac{(-1)^t}{2^{N}\lfloor N/2\rfloor!}\left(\begin{pmatrix}\lfloor \frac{t}{2}\rfloor+\lfloor \frac{N}{2}\rfloor-1 \\ \lfloor \frac{N}{2}\rfloor-1 \end{pmatrix}+h_{1}(N)\begin{pmatrix}\lfloor \frac{t}{2}\rfloor+\lfloor \frac{N}{2}\rfloor-2 \\ \lfloor \frac{N}{2}\rfloor-2 \end{pmatrix}\right.\\
\nonumber & & \left. \quad\quad\quad\quad\quad\quad+h_{2}(N)\begin{pmatrix}\lfloor \frac{t}{2}\rfloor+\lfloor \frac{N}{2}\rfloor-3 \\ \lfloor \frac{N}{2}\rfloor-3 \end{pmatrix}+O(t^{\lfloor N/2\rfloor-4})\right),
\end{eqnarray}
where
\begin{eqnarray}
\nonumber h_{1}(N)&=& \begin{cases} \frac{3\lfloor N/2\rfloor^2}{4} & N \textnormal{ is even} \\ \frac{3\lfloor N/2\rfloor^2+2\lfloor N/2\rfloor+2}{4} & N \textnormal{ is odd,}\end{cases} \\
\nonumber h_{2}(N)&=& \begin{cases} \frac{\lfloor N/2\rfloor^3}{18}+\frac{5\lfloor N/2\rfloor^2}{12}+\frac{\lfloor N/2\rfloor}{36} & N \textnormal{ is even} \\ \frac{\lfloor N/2\rfloor^3}{18}+\frac{5\lfloor N/2\rfloor^2}{12}+\frac{19\lfloor N/2\rfloor}{36} +\frac{1}{4} & N \textnormal{ is odd.}\end{cases} 
\end{eqnarray}

\section{The Polynomial $g^{(N)}_{k\lfloor N/k\rfloor}(x)$}

In this section, we determine the coefficients of the polynomial $g_{k\lfloor N/k\rfloor}^{(N)}(x)$, given in (\ref{qpf}),
$$
g^{(N)}_{k\lfloor N/k\rfloor}(x) = \sum_{j=0}^{k-1} \Gamma_{jk\lfloor N/k\rfloor}(N)x^{j}.
$$
As a direct consequence we can express the top-order term of the $k^{th}$ wave $W_{k}(n;N)$ 
$$
W_{k}(n;N)=\begin{pmatrix}\lfloor \frac{n}{k}\rfloor +\lfloor \frac{N}{k}\rfloor-1\\ \lfloor \frac{n}{k}\rfloor-1\end{pmatrix}\Gamma_{(n\%k)k\lfloor \frac{N}{k}\rfloor}(N) + (\textnormal{lower order terms}).
$$
For instance, for the second wave the top-order term is
$$
W_{2}(n;N)=\frac{(-1)^{n}}{2^{\lfloor N/2\rfloor+1}\lfloor N/2\rfloor!}n^{\lfloor N/2\rfloor}+ O(n^{\lfloor \frac{N}{2}\rfloor-1}).
$$
 
In order to obtain the coefficients $\Gamma_{jk\lfloor N/k\rfloor}(N)$ of $g_{k\lfloor N/k\rfloor}^{(N)}(x)$ we consider the finite Fourier series associated with  
\begin{equation}\label{gNk_series}
\frac{g_{k\lfloor N/k \rfloor}^{(N)}(x)}{1-x^{k}}=\sum_{n=0}^{\infty}a(n)x^{n}.
\end{equation} 
Because the inverse cyclotomic polynomial $\Theta_{k}(x)$ vanishes for $k^{th}$ root of unity not in $\Delta_{k}$, by evaluating $g^{(N)}_{k,\lfloor N/k\rfloor}(x)$ at $\xi$ such that $\xi^{k}=1$ and $\xi\neq 1$ we obtain
$$
g_{k,\lfloor N/k\rfloor}^{(N)}(\xi)=
\begin{cases}
\Theta_{k}(\xi)^{\lfloor N/k\rfloor}h_{k}^{(N)}(\xi)\quad \textnormal{ if }\xi\in \Delta_{k}\\
0\quad\quad\quad\quad\quad\quad\quad\quad\ \  \textnormal{ if }\xi \notin \Delta_{k}.
\end{cases}
$$
Therefore, it suffices to consider $\xi\in \Delta_{k}$. We can write the $n^{th}$ term in the series of (\ref{gNk_series})
\begin{eqnarray}
\nonumber a(n)&=&\frac{1}{k}\sum_{\xi^{k}=1, \xi\neq 1}g_{k\lfloor N/k\rfloor}^{(N)}(\xi)\xi^{-n}\\
\nonumber &=& \frac{1}{k}\sum_{\xi\in \Delta_{k}}\frac{\Theta_{k}(\xi)^{\lfloor N/k\rfloor}\xi^{-n}}{\Phi_{1}(\xi)^{N}\cdots \widehat{\Phi_{k}(\xi)^{\lfloor N/k\rfloor}} \cdots \Phi_{N}(\xi)}.
\end{eqnarray}
Consider the rational polynomial 
\begin{equation}
P_{k}^{(N)}(x)=\frac{\Phi_{1}(x)^{N}\cdots \widehat{\Phi_{k}(x)^{\lfloor N/k\rfloor}} \cdots \Phi_{N}(x)}{\Theta_{k}(x)^{\lfloor N/k\rfloor}}.
\end{equation}
By multiplying numerator and denominator with $\Phi_{k}(x)^{\lfloor N/k\rfloor}$ we get
$$
P_{k}^{(N)}(x)=\frac{\Phi_{1}(x)^{N}\cdots \widehat{\Phi_{k}(x)^{\lfloor N/k\rfloor}} \cdots \Phi_{N}(x)}{\Theta_{k}(x)^{\lfloor N/k\rfloor}}=\frac{(1-x)\cdots (1-x^{N})}{(1-x^{k})^{\lfloor N/k\rfloor}}.
$$
$P_{k}^{(N)}(x)$ is a polynomial. Indeed, by rearranging factors we have 
\begin{eqnarray}
\nonumber P_{k}^{(N)}(x)&=& \prod_{s=0}^{\lfloor N/k\rfloor-1}\prod_{j=1}^{k-1}(1-x^{sk+j})\prod_{i=1}^{\lfloor N/k\rfloor}\frac{1-x^{ik}}{1-x^{k}}\prod_{j=k\lfloor N/k\rfloor+1}^{N}(1-x^{j})\\
\label{PNkx} &=& \prod_{s=0}^{\lfloor N/k\rfloor}\prod_{j=1}^{k-1}(1-x^{sk+j})\prod_{i=1}^{\lfloor N/k\rfloor}\sum_{j=0}^{i-1}x^{kj}\prod_{j=k\lfloor N/k\rfloor+1}^{N}(1-x^{j}).
\end{eqnarray}

\begin{thm}

For $k$ an integer such that $1\leq k\leq N$,
\begin{equation}\label{gamma_jknk}
\Gamma_{j,k,\lfloor N/k\rfloor}(N)=\frac{1}{k}\sum_{\xi \in \Delta_{k}}\frac{\xi^{-j}}{P_{k}^{(N)}(\xi)}
\end{equation}
where $P_{k}^{(N)}(x)$ is given in (\ref{PNkx}).
\end{thm}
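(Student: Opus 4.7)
The plan is to combine the finite Fourier series identity already established just above the theorem with the periodicity of the power series $g_{k\lfloor N/k\rfloor}^{(N)}(x)/(1-x^k)$, and simply read off the coefficients.

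First, I would observe that by construction $g_{k\lfloor N/k\rfloor}^{(N)}(x) = \sum_{j=0}^{k-1}\Gamma_{jk\lfloor N/k\rfloor}(N)\,x^{j}$ has degree strictly less than $k$. Multiplying by the geometric expansion $\sum_{m\geq 0}x^{mk}$ gives
$$
\frac{g_{k\lfloor N/k\rfloor}^{(N)}(x)}{1-x^{k}} = \sum_{m=0}^{\infty}\sum_{j=0}^{k-1}\Gamma_{jk\lfloor N/k\rfloor}(N)\,x^{mk+j},
$$
so the coefficient sequence $(a(n))_{n\geq 0}$ from (\ref{gNk_series}) is periodic of period $k$, and in particular $a(j) = \Gamma_{jk\lfloor N/k\rfloor}(N)$ for $0\leq j<k$.

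Second, from the Fourier-series derivation carried out immediately before the theorem statement,
$$
a(n) = \frac{1}{k}\sum_{\xi\in\Delta_{k}}\frac{\xi^{-n}}{P_{k}^{(N)}(\xi)}
$$
holds for every $n\geq 0$. Specializing to $n=j$ with $0\leq j<k$ and combining with the identification of $a(j)$ above yields (\ref{gamma_jknk}).

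The only item requiring a brief check is that $P_{k}^{(N)}(\xi)$ is nonzero (and hence the right-hand side is well-defined) for $\xi\in\Delta_{k}$; this is immediate from the factorization displayed in (\ref{PNkx}), whose surviving factors correspond to $\Phi_{m}$ with $m\neq k$ together with a $\Theta_{k}$, each of which is nonvanishing at any primitive $k$-th root of unity. There is no substantial technical obstacle: all of the analytic work has already been done in the preceding derivation of $a(n)$ and in rewriting the denominator as an honest polynomial.
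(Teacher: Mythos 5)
Your proposal is correct and follows essentially the same route as the paper: the displayed theorem is precisely the culmination of the derivation immediately preceding it, namely the finite Fourier inversion $a(n)=\frac{1}{k}\sum_{\xi^{k}=1,\,\xi\neq 1}g_{k\lfloor N/k\rfloor}^{(N)}(\xi)\xi^{-n}$, the vanishing of $g_{k\lfloor N/k\rfloor}^{(N)}$ at non-primitive $k$-th roots via $\Theta_{k}$, and the rewriting of the denominator as $P_{k}^{(N)}(\xi)$ by Substitution Rule 2. Your only additions---making explicit that $a(j)=\Gamma_{jk\lfloor N/k\rfloor}(N)$ for $0\leq j<k$ by periodicity, and checking $P_{k}^{(N)}(\xi)\neq 0$ on $\Delta_{k}$---are correct and merely spell out what the paper leaves implicit.
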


In order to further simplify (\ref{gamma_jknk}) we first define a special trigonometric sum. 

\begin{defn}\label{GRS} 
Given $0\leq s<k$ two integers, we define the Gaussian-Ramanujan sum as 
$$
\sigma_{k}(t; s)=\sum_{\xi \in \Delta_{k}}\frac{\xi^{-t}}{(\xi)_{s}},
$$
where $\Delta_{k}$ is the set of all $k^{th}$ primitive roots of unity,  and $(q)_{n}$ is given in the $q$-Pochhammer notation 
$
(q)_{n}=(q;q)_{n}=\prod_{i=1}^{n}(1-q^{i}),$ for $n\geq 1$ and $(q)_{0}=1.$
\end{defn} 

It can be easily seen that $\sigma_{k}(t;s)$ is periodic in $t$ with a periodicity $k$. The values of $\sigma_{k}(t;s)$ are of the form $l/k$, where $l$ is an integer. We call the sum $\sigma_{k}(t;s)$ the Gaussian-Ramanujan sum as it can be expressed as a linear combination of Ramanujan sums with coefficients associated with the Gaussian binomial coefficients evaluated at the primitive roots of unity. In particular, we have $
\sigma_{k}(t;0)=c_{k}(-t)=c_{k}(t), 
$ and $\sigma_{k}(t;k-1)=\frac{1}{k}\sigma_{k}(t;0)=\frac{1}{k}c_{k}(t)$, where $c_{k}(t)$ is the Ramanujan sum.

Substituting $\xi\in \Delta_{k}$ for $x$ in $P_{k}^{(N)}(x)$ given in (\ref{PNkx}) and using 
\begin{equation}\label{k_eqn}
\prod_{j=1}^{k-1}(1-\xi^{j})=k\quad \textnormal{ with }\quad \sum_{j=0}^{i-1}\xi^{kj}=i
\end{equation}
we have 
$$
P_{k}^{(N)}(\xi)=k^{\lfloor N/k\rfloor}\lfloor N/k\rfloor!(1-\xi)\cdots (1-\xi^{N\%k}).
$$
Therefore, by (\ref{gamma_jknk}) we have 
\begin{eqnarray}
\nonumber \Gamma_{jk\lfloor N/k\rfloor}(N)&=&\frac{1}{k}\sum_{\xi \in \Delta_{k}}\frac{\xi^{-j}}{P_{k}^{(N)}(\xi)} \\
\nonumber &=& \frac{1}{k^{\lfloor N/k\rfloor+1}\lfloor N/k\rfloor!}\sum_{\xi\in \Delta_{k}}\frac{\xi^{-j}}{(1-\xi)\cdots (1-\xi^{N\%k})} \\
\nonumber &=& \frac{1}{k^{\lfloor N/k\rfloor+1}\lfloor N/k\rfloor!}\sigma_{k}(j;N\%k).
\end{eqnarray}
Let $R_{k,s}(x)=\sum^{k-1}_{t=0}\sigma_{k}(t;s)x^{t}$. This proves Theorem \ref{main_gamma_jkNk}. As a consequence of this result, one can observe that it suffices to compute only $k$ polynomials in order to determine all $g_{k\lfloor N/k\rfloor}^{(N)}(x)$ for all $k=1,2,\dots,N$. 

\begin{exmp} For $k=3$ we have 
\begin{eqnarray}
\nonumber g_{3,\lfloor N/3\rfloor}^{(N)}(x) &=& \Gamma_{0,3,\lfloor N/3\rfloor}(N)+\Gamma_{1,3,\lfloor N/3\rfloor}(N)x+\Gamma_{2,3,\lfloor N/3\rfloor}(N)x^2 \\
\nonumber &=& \begin{cases}
\frac{1}{3^{\lfloor N/3\rfloor+1}\lfloor N/3\rfloor!}(-x^2-x+2), & \text{if }N\%3=0\\
\frac{1}{3^{\lfloor N/3\rfloor+1}\lfloor N/3\rfloor!}(1-x^{2}), & \text{if }N\%3=1\\
\frac{1}{3^{\lfloor N/3\rfloor+2}\lfloor N/3\rfloor!}(-x^2-x+2) & \text{if }N\%3=2.
		 \end{cases}
\end{eqnarray}
\end{exmp}

\begin{cor} The below results can be directly proved from the properties of Ramanujan sums.
\begin{enumerate}
    \item $g_{k\lfloor N/k\rfloor}^{(N)}(x)=\frac{1}{k^{\lfloor N/k\rfloor}\lfloor N/k\rfloor!}g_{k\lfloor N/k\rfloor}^{(N\% k)}(x)$. 
    
\item Suppose $N=kM$, we have 
$$
g^{(N+k-1)}_{k,M}(x)=\frac{1}{k}g^{(N)}_{k,M}(x).
$$ 

\item If $k=p^{l}$,where $p$ is a prime, $l$ is an integer and let $N=kM$, then $$g^{(N)}_{p^{l},M}(x)=\frac{p^{l-1}}{p^{l(M+1)}M!}\left(p-\Psi_{p}(x^{p^{l-1}})\right).$$

\item  If $k=pq$ for distinct primes $p,q$ and let $N=kM$, then $$g_{pq,M}^{(N)}(x)=\frac{1}{(pq)^{M+1}M!}\left(pq+\Psi_{pq}(x)-p\Psi_{q}(x^{p})-q\Psi_{p}(x^{q})\right).$$

\end{enumerate}

\end{cor}

%\item Let $\gamma(k)=\prod_{p|k}p$ and let $N=kM$, then we %have 
%$$
%g_{k,M}^{(N)}(x)=\frac{1}{\gamma(k)^{2}k^{M}M!}g^{(N)}_{\ga%mma(k),M}(x^{k/\gamma(k)}).
%$$

%Let $\gamma(k)=\prod_{p|k}p$ is the sqarefree kernel of $k$

%$$
%R_{n}(x)=\frac{n}{\gamma(n)}R_{\gamma(n)}(x^{n/\gamma(n)})
%$$

%$$
%R_{np}(x)=pR_{n}(p) if p|n
%$$

%$$
%R_{np}(x)=pR_{n}(x^{p})-\Psi_{p}(x^{n})R_{n}(x)
%$$

For a given $k$ one can obtain an $O(k^2)$ time algorithm to compute all $k^2$ values $\sigma_{k}(t;j)$ for $0\leq j < k$ and $0\leq t<k$ using the below recurrence relation. Moreover, using the von Sterneck's arithmetic function one can compute the Ramanujan sum efficiently. 

\begin{lem}[Recurrence Relation]\label{pascal}
For $0<s<k$ we have 
$$
\sigma_{k}(t;s)=\sigma_{k}(t;s-1)+\sigma_{k}(t-s;s),
$$
with $\sigma_{k}(t;0)=c_{k}(t)$, the Ramanujan sum. 
\end{lem}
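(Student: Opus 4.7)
The plan is to prove the recurrence directly from the definition by splitting the last factor of the $q$-Pochhammer symbol. Using the identity $(q)_{j} = (q)_{j-1}(1-q^{j})$, I would substitute $q=\xi$ and expand:
\begin{equation*}
\sigma_{k}(t;j) = \sum_{\xi\in\Delta_{k}}\xi^{-t}(\xi)_{j-1}(1-\xi^{j})
= \sum_{\xi\in\Delta_{k}}\xi^{-t}(\xi)_{j-1} - \sum_{\xi\in\Delta_{k}}\xi^{-t+j}(\xi)_{j-1}.
\end{equation*}
The first sum is exactly $\sigma_{k}(t;j-1)$ by the definition of the Gaussian--Ramanujan sum. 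The second sum, after relabeling the exponent as $\xi^{-(t-j)}$, is exactly $\sigma_{k}(t-j;j-1)$. This yields the recurrence.

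For the base case, when $j=0$ the empty product $(\xi)_{0}=1$ gives
\[
\sigma_{k}(t;0)=\sum_{\xi\in\Delta_{k}}\xi^{-t},
\]
which is the Ramanujan sum $c_{k}(-t)$. Since $\Delta_{k}$ is closed under complex conjugation, $c_{k}(-t)=c_{k}(t)$, matching the stated base case. This was already observed in the paragraph preceding the lemma.

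There is no real obstacle here; the proof is essentially a one-line algebraic manipulation combined with the observation that shifting the exponent $-t$ to $-t+j=-(t-j)$ corresponds precisely to shifting the first argument of $\sigma_{k}$. The only thing worth flagging explicitly is the interpretation of $\sigma_{k}(t-j;j-1)$ when $t-j<0$: by the definition $\sigma_{k}(s;j-1)=\sum_{\xi\in\Delta_{k}}\xi^{-s}(\xi)_{j-1}$, the first argument only matters modulo $k$ (since $\xi^{k}=1$ for $\xi\in\Delta_{k}$), so the recurrence is well-defined for all integers $t$, and one may equally well read the first argument reduced modulo $k$ if desired.
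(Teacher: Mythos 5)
Your proof is correct and is exactly the argument the paper gives: the paper's one-line proof invokes the identity $(\xi)_{j}=(\xi)_{j-1}-\xi^{j}(\xi)_{j-1}$, which is precisely your splitting of the last factor of the $q$-Pochhammer symbol. Your additional remarks on the base case and on reading the first argument modulo $k$ are harmless elaborations of the same approach.
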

\begin{proof}
The result is straightforward by $(\xi)_{s}=(\xi)_{s-1}-\xi^{s}(\xi)_{s-1}$ for $s\geq 1$.
\end{proof}
By Theorem \ref{main_gamma_jkNk}, we have the coefficient $\Gamma_{jk\lfloor N/k\rfloor}$ being just a scaling factor of $\sigma_{k}(j,N\%k)$ it is not surprising (or perhaps, surprising) that the recurrence relation is identical to the one satisfied by the restricted partition function
$$ 
p_{s}(t)=p_{s-1}(t)+p_{s}(t-s).
$$ 
From the above recurrence relation we have 
$$
R_{k,s-1}(x)=(1-x^{k-
s})R_{k,s}(x) \mod 1-x^k,
$$
and 
$$
R_{k,k-1}(x)=\frac{1}{k}(x)_{k-1}\mod (1-x^k)=\frac{1}{k}\sum_{t=0}^{k-1}c_{k}(t)x^{t},
$$
where $c_{k}(t)$ is the Ramanujan sum. Using the discrete Fourier transform we can prove the following result. 
\begin{lem}\label{R_k_s_poly}
$$
R_{k,s}(x)=\frac{1}{k}(x)_{k-1}(x)_{k-1-s} \mod (1-x^{k}).
$$
\end{lem}
\begin{proof}
Let $\xi\in \Delta_{k}$ and $R_{k,s}(x)=\frac{1}{k}(x)_{k-1}(x)_{k-1-s}\mod (1-x^{k})=\sum_{\alpha=0}^{k-1}a_{t}x^{t}$. Then 
$$
R_{k,s}(\xi^{j})=\sum_{a=0}^{k-1}a_{t}\xi^{jt}=\frac{1}{k}(\xi^{j})_{k-1}(\xi^{j})_{k-1-s}.
$$
We solve this by taking an inverse discrete Fourier transform to obtain 
\begin{equation}\label{a_t}
a_{t}=\frac{1}{k}\sum_{j=0}^{k-1}R_{k,s}(\xi^{j})\xi^{-jt}.
\end{equation}

$$
R_{k,s}(\xi^{j})=\begin{cases} 0 & \textnormal{gcd}(j,k)\neq 1 \\ (\xi^{j})_{k-1-s} = k/(\xi^{j})_{s}& \textnormal{gcd}(j,k)= 1. \end{cases}
$$
Therefore, we have 
$$
a_{t}=\frac{1}{k}\mathop{\sum_{0\leq j\leq k}}_{\textnormal{gcd}(j,k)=1}\xi^{-jt}\frac{k}{(\xi^{j})}_{s}=\sum_{\eta\in \Delta_{k}}\eta^{-t}/(\eta)_{s}=\sigma_{k}(t;s).
$$
\end{proof}

\begin{cor} Let $s< k/p$, where $p$ is the smallest prime divisor of $k$ ($p=1$ if $k$ is a prime). Then,  
$$
R_{k,s}(x)=(x)_{k-1-s} \mod (1-x^{k}).
$$
\end{cor}
\begin{proof}
From the condition $s<k/p$ we can see that the set $\{k-s,\cdots,k-1\}$ does not contain any divisor of $k$. Thus, the factors $(1-x^{k-s-1}),\cdots, (1-x^{k-1})$ do not vanish for any $x=\xi^{j}$, $\xi\in \Delta_{k}$ and $j=1,2,\cdots,(k-1)$. Therefore, we have $R_{k,s}(\xi^{j})=(\xi^{j})_{k-1-s}$ in (\ref{a_t}).       
\end{proof}

\begin{cor} For $p$ prime
$$
R_{p,s}(x)=\sum_{t=0}^{p-1}\sigma_{p}(t,s)x^{t}=(x)_{s} \mod (1-x^{p}),\quad \textnormal{ for } p \textnormal{ prime and }s\geq 1.
$$
\end{cor}

By the parity counting partition and sieving of coefficients, for an arbitrary $s\in \{0,1,\cdots,k-1\}$, we have the following interpretation:
$$
\sigma_{k}(t;k-1-s)=\frac{1}{k} \sum_{j=0}^{\lfloor\frac{k-1}{2}\rfloor}\left(\tilde{E}_{s,k}(jk+t)-\tilde{O}_{s,k}(jk+t)\right), 
$$
where $\tilde{E}_{k,s}(l)$ (respectively $\tilde{O}_{k,s}(l)$) is the number of partitions of $l$ into an even (respectively an odd) number of parts each less than $k$ and permitting parts of size $\leq s$ at most twice and others at most once. 

If $s< k/p$, where $p$ is the smallest prime factor of $k$ ($p=1$ if $k$ is a prime), then the interpretation is simpler:
\begin{equation}\label{parity_count}
\sigma_{k}(t;k-1-s)=\sum_{j=0}^{\lfloor\frac{k-1}{2}\rfloor}\left(E_{s}(jk+t)-O_{s}(jk+t)\right)
\end{equation}
where $E_{s}(l)$ (respectively $O_{s}(l)$) is the number of partitions of $l$ into an even (respectively an odd) number of distinct parts each less than $k$. The result (\ref{parity_count}) is known for the case $s=k-1$, i.e., for the Ramanujan sum \cite{ramanathan}.

In order to perform further analysis, we write the Gaussian Ramanujan sum as given below. 

\begin{lem} 
$$
\sigma_{k}(t;s)=\frac{1}{k}
\sum_{\xi \in \Delta_{k}}\xi^{t}(\xi)_{k-1-s}.
$$
\end{lem}
\begin{proof}
For $\xi\in \Delta_{k}$, we know  $(1-\xi)\cdots (1-\xi^{k-1})=k.$ Multiplying and dividing by $(1-\xi^{s+1})\cdots (1-\xi^{k-1})$ in the summation we have 
\begin{eqnarray}
\nonumber \sigma_{k}(t;s)&=& \frac{1}{k} \sum_{\xi\in\Delta_{k}}\xi^{-t}(1-\xi^{s+1})\cdots (1-\xi^{k-1})\\
\nonumber &=& k\sum_{\eta\in \Delta_{k}}\eta^{t}(1-\eta^{k-s-1})\cdots (1-\eta),\quad \textnormal{ where } \eta=\xi^{-1} \textnormal{ and } \xi^{-k}=1.
\end{eqnarray}
\end{proof}

Using the above formula we can write
\begin{eqnarray}
\nonumber \eta_{k}(t,s)&=& \sum_{d|k}\sigma_{d}(t;s) = \frac{1}{k}\sum_{d|k}\sum_{\xi\in \Delta_{d}}\xi^{t}(\xi^{j})_{k-1-s} = \frac{1}{k}\sum_{j=0}^{k-1}\eta^{jt}(\eta^{j})_{k-1-s},
\end{eqnarray}
where $\eta$ is a primitive $k^{th}$ root of unity. From the linear recurrence relation we have 
$$
\eta_{k}(t,s)=\eta_{k}(t,s-1)+\eta_{k}(t-s,s).
$$
Also, notice that $\eta_{d}(t;s)=0$ for $d|k$ and $d+s<k$. By the m\"{o}bius inversion we have 
$$
\sigma_{k}(t;s)=\sum_{d|k}\mu\left(\frac{k}{d}\right)\eta_{d}(t;s).
$$
Using the above equation we can express the Gaussian-Ramanujan sum for simple cases. 
\begin{exmp}
For $k>2$ and $1\leq t\leq k$ we have
$$
\sigma_{k}(t;k-2)=\frac{1}{k}\sum_{\delta=0}^{1}\sum_{d|(k,t-\delta)}(-1)^{\textnormal{wt}(\delta)}\mu\left(\frac{k}{d}\right)d.
$$
For $3\leq t\leq k+2$ we have
$$
\sigma_{k}(t;k-3)=\frac{1}{k}\sum_{\delta=0}^{3}\sum_{d|(k,t-\delta)}(-1)^{\textnormal{wt}(\delta)}\mu\left(\frac{k}{d}\right)d, 
$$
where $\textnormal{wt}(\delta)$ is the number of $1$s in the binary representation of $\delta$. 
\end{exmp}

%If $j=p$ a prime
%$$
%\sigma_{t}(0,p)=(p-1)-x-x^2-\cdots-x^{p-1}???
%$$

%$$
%g_{p\lfloor p/p\rfloor}^{(p)}=\frac{1}{p}\left((p-1)-x-x^2-\%cdots-x^{p-1}\right)
%$$

%$$
%g_{p\lfloor N/p\rfloor}^{(p)}
%$$

%If $p$ is prime greater than $3$
%$$
%\sigma_{t}(p-2,p)=\frac{1}{p}+\cdots-\frac{1}{}+\frac{1}{p}x^{p-1}???
%$$

%$$
%g_{p,\lfloor N/p\rfloor}^{(N)}(x) = \Gamma_{0,p,\lfloor %N/p\rfloor}(N)+\Gamma_{1,p,\lfloor %N/p\rfloor}(N)x+\cdots+\Gamma_{p-1,p,\lfloor %N/p\rfloor}(N)x^{p-1}
%$$
%The advantage of considering $p$ primes is that every %$p^{th}$ root of unity is a primitive root. 

%$$
%\textnormal{case}0: (p-1)-x-x^2-\cdots-x^{p-1}
%$$

%$$
%\textnormal{case}p-1: %\frac{1}{p}((p-1)-x-x^2-\cdots-x^{p-1})
%$$

%$$
%\textnormal{case}p-2: (1-x^{p-1})
%$$

%$$
%\textnormal{case}p-3: (1+x^{p-3}-x^{p-2}-x^{p-1})
%$$

%$$
%\textnormal{case}1: %\frac{p-1}{2}+\frac{p-3}{2}x+\cdots+\frac{1}{2}x^{p-1}-0x^p-%\frac{1}{2}x^{p+1}-\cdots-\frac{p-1}{2}x^{p-1}
%$$

In view that $\xi$ is a $k^{th}$ primitive root of unity, we obtain the sieved sum
$$
\xi^{t}(\xi)_{s}=\sum^{s(s+1)/2}_{i=0}a_{i,s}\xi^{i+t}=\sum_{j=0}^{k-1}M_{t,j,s}^{(k)}\xi^{j},
$$
where 
$
M_{t,j,s}^{(k)}=\sum_{\alpha=0}^{\lfloor s(s+1)/2k\rfloor}a_{j-t+\alpha k,s}
$. Thus, we have 
$$
\sigma_{k}(t;k-1-s)=\sum_{\xi\in \Delta_{k}}\xi^{-t}(\xi)_{s}=\sum_{j=0}^{k-1}M_{t,j,s}^{(k)}c_{k}(j).
$$
Explicit formulas for $M_{t,j,s}^{(k)}$ for certain special cases were given in \cite{Goswami}. We can obtain a bound as follows 
$$
|\sigma_{k}(t;k-1-s)|\leq \varphi(k)\sum_{j=0}^{k-1}|M_{t,j,s}^{(k)}|.
$$
We provide bounds for some special cases using the bounds on the Ramanujan sum.
\begin{thm} The following bounds hold:
\begin{enumerate}
    \item $|\sigma_{k}(t;0)|\leq \varphi(k).$
    \item $|\sigma_{k}(t;k-1)|\leq \frac{1}{k}\varphi(k).$
    \item $|\sigma_{k}(t;k-2)|\leq \frac{2}{k}\varphi(k).$
    \item $|\sigma_{k}(t;1)|\leq (k-1)\varphi(k)/2.$
\end{enumerate}
\end{thm}
\begin{proof}
The results (1),(2) and (3) can be easily proved using the linear recurrence relation given in Lemma \ref{pascal} and the bound $|c_{k}(t)|\leq \varphi(k)$. In order to prove (4), we consider $\xi\in \Delta_{k}$. By $(\xi)_{k-1}=k$ and $(1-\xi)(\xi+2\xi^{2}+\cdots+(k-1)\xi^{k-1})=-k$ we have 
\begin{eqnarray}
\nonumber \sigma_{k}(t;1)&=&\sum_{\xi\in \Delta_{k}}\frac{\xi^{-t}}{1-\xi}= \frac{-1}{k}\sum_{\xi\in \Delta_{k}}(\xi^{-t}+2\xi^{2-t}+\cdots+(k-1)\xi^{k-1-t})\\
\nonumber &=& \frac{-1}{k}\sum_{j=1}^{k-1}jc_{k}(j-t).
\end{eqnarray}
Therefore, we have the bound 
$
|\sigma_{k}(t;1)|\leq (k-1)\varphi(k)/2.
$
\end{proof}

For large values of $s$ one can directly use the bounds given by Sudler \cite{sudler}.

\begin{thm}
 For $k-\sqrt{2k}-1<s$ we have 
 $$
 \log \max_{0\leq t<k} |\sigma_{k}(t;s)| = Ks+O(\log s),
 $$
 where $K<0.2$, a constant.  
\end{thm}

\begin{proof}
 We have $(x)_{k-1-s}\mod (1-x^{k})=(x)_{k-1-s}$ if the degree of $(x)_{k-1-s}$ is less than $k$. That is, 
 $(k-1-s)(k-s)/2<k.$ Implies, $(k-s-1)^{2}<2k$. So, there is no sieving of the sum. Hence, the bounds on $\sigma_{k}(t;s)$ is just given by the bounds on the coefficients of $(x)_{s}$ given in \cite{sudler}. 
\end{proof}

\end{document}